\newtheorem{theorem}{Theorem}[section]
\newtheorem{lemma}[theorem]{Lemma}
\newtheorem{corollary}[theorem]{Corollary}
\newtheorem{proposition}[theorem]{Proposition}
\newtheorem{property}[theorem]{Property}
\newtheorem{definition}[theorem]{Definition}
\title{Necessary and Sufficient Conditions for the Existence of an LU Factorization for General Rank Deficient Matrices\thanks{Code is available at \url{https://doi.org/10.5281/zenodo.18224816}}}
\date{\today}
\author{Eric Darve\\
Institute for Computational and Mathematical Engineering\\	
Mechanical Engineering Department\\
Stanford University\\
Stanford, CA 94305, USA\\
\texttt{darve@stanford.edu}
}
\lstdefinestyle{PythonStyle}{
    language=Python,
    basicstyle=\ttfamily\fontsize{7pt}{8pt}\selectfont, 
    keywordstyle=\color{blue}\bfseries, 
    stringstyle=\color{red}, 
    commentstyle=\color{green}\itshape, 
    showstringspaces=false, 
    numbers=left, 
    numberstyle=\tiny\color{gray}, 
    breaklines=true, 
    captionpos=b, 
    escapeinside={(*@}{@*)}, 
}
\DeclareMathOperator{\rk}{rank}
\DeclareMathOperator{\nl}{null}
\DeclareMathOperator{\spn}{span}
\newcommand{\by}{\times}
\Crefname{property}{Property}{Properties}
\crefname{property}{property}{properties}
\Crefname{corollary}{Corollary}{Corollaries}
\crefname{corollary}{corollary}{corollaries}
\Crefname{lemma}{Lemma}{Lemmas}
\crefname{lemma}{lemma}{lemmas}
\Crefname{proposition}{Proposition}{Propositions}
\crefname{proposition}{proposition}{propositions}
\Crefname{theorem}{Theorem}{Theorems}
\crefname{theorem}{theorem}{theorems}
\Crefname{lstlisting}{Algorithm}{Algorithms}
\crefname{lstlisting}{algorithm}{algorithms}
\begin{document}

\maketitle

\begin{abstract}
	We establish necessary and sufficient conditions for the existence of an LU factorization $A=LU$ for an arbitrary square matrix $A$, including singular and rank-deficient cases, without the use of row or column permutations. We prove that such a factorization exists if and only if the nullity of every leading principal submatrix is bounded by the sum of the nullities of the corresponding leading column and row blocks. While building upon the work of Okunev and Johnson, we present simpler, constructive proofs. Furthermore, we extend these results to characterize rank-revealing factorizations, providing explicit sparsity bounds for the factors $L$ and $U$. Finally, we derive analogous necessary and sufficient conditions for the existence of factorizations constrained to have unit lower or unit upper triangular factors.
\end{abstract}

\keywords{LU factorization, Rank-deficient matrices, Singular matrices, Rank-revealing factorization, Nullity, Sparsity}

It is well-known that for any square matrix $A$, there exists a permutation of rows (and potentially columns) such that an LU factorization exists. Specifically, the factorization $PA = LU$, where $P$ is a permutation matrix, $L$ is unit lower triangular, and $U$ is upper triangular, is guaranteed to exist for any matrix $A$~\cite{higham_gaussian_2011,lu_numerical_2021}.

However, the existence of an LU factorization for a matrix $A$ \emph{without} permutations (that is, $A = LU$) is not guaranteed. A classical result states that for a non-singular matrix, such a factorization exists if and only if all its leading principal minors are non-zero. When the matrix is singular or when leading principal minors vanish, the situation becomes significantly more complex. The standard conditions involving determinants are no longer sufficient to characterize the existence of the factorization.

This paper addresses the necessary and sufficient conditions for the existence of an LU factorization for an arbitrary matrix $A$, without imposing restrictions on its rank or invertibility. We extend the work of Okunev and Johnson~\cite{okunev_necessary_2005}, who established rank-based conditions for this problem. See also~\cite{dopico_multiple_2006} for a discussion on multiple factorizations of singular matrices. Other perspectives on factorizations and their existence in general algebraic structures or specific contexts can be found in~\cite{nagarajan_products_1999,johnson_inherited_1989,jeffrey_lu_2010,higham_gaussian_2011}.

The contributions of this paper are threefold. First, we formulate the necessary and sufficient condition using the \emph{nullity} of the leading principal submatrices relative to the nullity of the corresponding column and row blocks. Specifically, we prove that an LU factorization exists if and only if for all $k$ (this condition is equivalent to~\cite{okunev_necessary_2005}):
\[
	\nl(A[1:k,1:k]) \le \nl(A[:,1:k]) + \nl({A[1:k,:]}^T)
\]
Note that, in this paper, null is the dimension of the null space of a matrix (see~\Cref{tab:notations}).

Second, we provide new, simpler proofs of these results. Our proofs are constructive and rely on induction, explicitly building the factorization by handling the singular cases while preserving the triangular structure. Finally, we extend these results to characterize rank-revealing factorizations, providing tight sparsity bounds for the factors. We also derive analogous conditions for factorizations requiring unit lower or unit upper triangular factors.

\begin{table}[htbp]
	\centering
	\begin{tabular}{ll}
		\toprule
		$\rk(A)$     & Rank of matrix $A$ \\
		$\nl(A)$     & Dimension of the right null space of matrix $A$ \\
		$A^T$        & Transpose of matrix $A$ \\
		$a_{ij}$     & Entry $(i,j)$ of matrix $A$ \\
		$A[:,i]$     & Column $i$ of matrix $A$ \\
		$A[i,:]$     & Row $i$ of matrix $A$ \\
		$A[i:j,k:l]$ & Submatrix of $A$ with rows from $i$ to $j$ and columns from $k$ to $l$ \\
		$A[i:j,k:]$  & Submatrix of $A$ with rows from $i$ to $j$ and columns from $k$ to the end \\
		$e_i$        & $i$-th vector of the standard basis \\
		$I_n$        & Identity matrix of size $n$ \\
		\bottomrule
	\end{tabular}
	\vspace{5pt}
	\caption{\label{tab:notations} Notations used in the paper.}
\end{table}

\paragraph{Triangular factorizations and causal systems.} Before presenting the technical results, we motivate the study of LU factorizations without permutations by considering their interpretation in causal systems and their computational advantages.

In some applications, the index ordering carries physical significance, such as time in dynamic systems. A lower triangular matrix $L$ represents a causal operator where the current state $i$ depends only on past states $j \le i$. Similarly, $U$ can be interpreted as capturing reverse-flow dependencies. When $A=LU$, the system respects this intrinsic ordering. However, introducing permutations (as in $PAQ = LU$) destroys this structure: $P$ scrambles the equations (rows) and $Q$ scrambles the variables (columns), breaking the link between the matrix indices and the causal dependencies. Consequently, in contexts where the ordering is immutable (e.g., autoregressive time series, structural equation modeling, or causal signal processing), standard pivoting strategies are inapplicable. This necessitates a theory for the existence of LU factorizations that strictly preserves the original indexing.

Moreover, the choice of the LU factorization over other decompositions like QR or SVD is often driven by computational efficiency and sparsity. While the QR decomposition always exists for any matrix, the orthogonal factor $Q$ is typically dense. In contrast, the triangular factors $L$ and $U$ often preserve the sparsity of $A$ to a much greater extent, leading to significant savings in storage and computational cost. This cost advantage may also extend to physical limitations, such as data transport or logistical constraints when they are represented by the matrix entries. These reasons lead us to the question of determining exactly when an LU factorization exists without permutations.

We finally summarize the contributions of this paper, which include:
\begin{itemize}
	\item A Nullity-Based Characterization: We prove that factorization exists if and only if $\text{null}(A_{11}) \le \text{null}(\text{Col}) + \text{null}(\text{Row})$ for all $k$. The proofs are original.
	\item Rank-Revealing Structure: we show that we obtain a rank-revealing factorization when the LU factorization exists.
	\item Sparsity: we provide tight sparsity bounds for the L and U factors.
	\item Constructive Proof: we provide a constructive proof by induction. We include a Python implementation in the appendix to fully characterize the algorithm and allow independent verification of our results.
	\item Unit Triangular Factors: we provide a characterization of the conditions for the existence of unit triangular factors.
\end{itemize}

\paragraph{Organization of paper.}
The remainder of the paper is organized as follows. \Cref{sec:classical} reviews classical results regarding factorizations with permutations. \Cref{sec:main} presents the main theorem and the derivation of the nullity condition. \Cref{thm:main2} strengthens this result by characterizing rank-revealing factorizations. Finally, \Cref{sec:unit_triangular} discusses the specific requirements for unit triangular factors.

We begin with a few classical results that will serve as reference for the later sections.

\section{Classical results}\label{sec:classical}

We begin by recalling standard existence results involving permutations. These theorems correspond to Gaussian elimination with partial and full pivoting.

\begin{theorem}[Partial Pivoting]\label{thm:row_permutation}
	For any square matrix $A$ of size $n$, there exists a permutation matrix $P$ such that $PA = LU$, where $L$ is unit lower triangular and $U$ is upper triangular.
\end{theorem}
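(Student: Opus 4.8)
The plan is to argue by induction on the size $n$ of $A$, which is nothing more than Gaussian elimination with partial pivoting cast recursively. The base case $n=1$ is immediate: take $P=L=[1]$ and $U=[a_{11}]$.

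For the inductive step I would assume the claim for all matrices of size $n-1$ and take $A$ of size $n$, then split on whether the first column of $A$ vanishes. If $A[:,1]=0$, write $A=\begin{pmatrix} 0 & b^T \\ 0 & A' \end{pmatrix}$ with $A'$ of size $n-1$; applying the inductive hypothesis gives $P'A'=L'U'$, and one checks directly that $P=\begin{pmatrix} 1 & 0 \\ 0 & P' \end{pmatrix}$, $L=\begin{pmatrix} 1 & 0 \\ 0 & L' \end{pmatrix}$, $U=\begin{pmatrix} 0 & b^T \\ 0 & U' \end{pmatrix}$ work, since a zero first column is compatible with $U$ being upper triangular (its diagonal is allowed to contain zeros).

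If $A[:,1]\neq 0$, I would pick a row index $i$ with $a_{i1}\neq 0$ --- partial pivoting selects the entry of largest modulus, but for mere existence any nonzero entry suffices --- and let $P_1$ be the transposition of rows $1$ and $i$, so that $P_1A=\begin{pmatrix} \alpha & b^T \\ c & A' \end{pmatrix}$ with $\alpha\neq 0$. A single elimination step then yields $\begin{pmatrix} 1 & 0 \\ -c/\alpha & I_{n-1} \end{pmatrix}P_1A=\begin{pmatrix} \alpha & b^T \\ 0 & S \end{pmatrix}$, where $S=A'-cb^T/\alpha$ is the Schur complement of size $n-1$. Applying the inductive hypothesis to $S$ gives $P'S=L'U'$.

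The one step that needs care --- and really the only obstacle --- is collapsing the two permutations $P_1$ and $\mathrm{diag}(1,P')$ into a single $P$ acting on the left of $A$. Writing $\tilde P=\begin{pmatrix} 1 & 0 \\ 0 & P' \end{pmatrix}$, I would use the commutation identity $\tilde P\begin{pmatrix} 1 & 0 \\ -c/\alpha & I_{n-1} \end{pmatrix}=\begin{pmatrix} 1 & 0 \\ -P'c/\alpha & I_{n-1} \end{pmatrix}\tilde P$, which simply records that permuting rows $2,\dots,n$ of a unit lower triangular elimination factor keeps it unit lower triangular. Consequently, with $P=\tilde P P_1$ (a permutation matrix, being a product of two), one obtains $PA=\begin{pmatrix} 1 & 0 \\ P'c/\alpha & L' \end{pmatrix}\begin{pmatrix} \alpha & b^T \\ 0 & U' \end{pmatrix}$; the left factor is unit lower triangular because $L'$ is, and the right factor is upper triangular, closing the induction. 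I expect the accumulation/commutation bookkeeping to be the main (and essentially only) subtlety, while the vanishing-pivot-column case is exactly where the argument silently accommodates singular $A$.
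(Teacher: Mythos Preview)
Your proof is correct and follows essentially the same inductive Gaussian-elimination argument as the paper: split on whether the first column vanishes, permute a nonzero entry into the pivot position otherwise, form the Schur complement, and recurse. In fact you are more careful than the paper, which hides the permutation-accumulation step behind ``the result for $A$ follows,'' whereas you spell out the commutation identity $\tilde P\begin{pmatrix}1&0\\-c/\alpha&I\end{pmatrix}=\begin{pmatrix}1&0\\-P'c/\alpha&I\end{pmatrix}\tilde P$ that makes it work.
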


\begin{proof}
	This is a classical result. We can prove it by induction on $n$. The result is true for $n=1$.

	Assume it is true for $n-1$. If $a_{11} \neq 0$, then consider:
	\[
		A = LDU
	\]
	where
	\[
		L = \begin{pmatrix}
			1                  & 0 \\
			a_{11}^{-1} A_{21} & I
		\end{pmatrix}, \quad
		D = \begin{pmatrix}
			1 & 0 \\
			0 & S
		\end{pmatrix}, \quad
		U = \begin{pmatrix}
			a_{11} & A_{12} \\
			0      & I
		\end{pmatrix}
	\]
	We can apply the induction hypothesis to $S$. The result for $A$ follows.

	If $a_{11} = 0$, we have two cases.

	Case 1: column 1 of $A$ is non-zero. We can find the smallest $i$ such that $A[i,1] \neq 0$. Pivot row 1 and $i$. Denote by $B = \Pi_i A$ the permuted matrix (with $\Pi_i^2 = I$). Since $b_{11} \neq 0$, $B$ has an LU factorization. This proves the result for $A$.

	Case 2: column 1 of $A$ is zero. We choose:
	\[
		A = I D U
	\]
	where
	\[
		D = \begin{pmatrix}
			1 & 0 \\
			0 & S
		\end{pmatrix}, \quad
		U = \begin{pmatrix}
			0 & A_{12} \\
			0 & I
		\end{pmatrix}
	\]
	and $S=A_{22}$. We can apply the induction hypothesis to $S$. The result for $A$ follows.
\end{proof}

\begin{corollary}\label{cor:col_permutation}
	For any square matrix $A$ of size $n$, there exists a permutation matrix $Q$ such that $AQ = LU$, where $L$ is lower triangular and $U$ is unit upper triangular.
\end{corollary}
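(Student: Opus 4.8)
The plan is to reduce the statement to \Cref{thm:row_permutation} by transposition rather than running a fresh induction. I would apply \Cref{thm:row_permutation} to the square matrix $A^T$, which yields a permutation matrix $P$ such that $P A^T = \tilde L \tilde U$, where $\tilde L$ is unit lower triangular and $\tilde U$ is upper triangular.

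Next I would transpose this identity. Using $(MN)^T = N^T M^T$, the relation $P A^T = \tilde L \tilde U$ becomes $A P^T = \tilde U^T \tilde L^T$. Setting $Q = P^T$, $L = \tilde U^T$, and $U = \tilde L^T$, we obtain $AQ = LU$, which is the desired form.

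It then remains to verify the structural claims. The transpose of an upper triangular matrix is lower triangular, so $L = \tilde U^T$ is lower triangular; the transpose of a unit lower triangular matrix is unit upper triangular, so $U = \tilde L^T$ is unit upper triangular; and the transpose of a permutation matrix (which equals its inverse) is again a permutation matrix, so $Q = P^T$ is admissible. This closes the argument.

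I do not anticipate a genuine obstacle here; the only points requiring a moment of care are the reversal of factor order when transposing a product and the observation that transposition moves the unit-diagonal normalization from the left factor to the right factor, so that $L$ ends up merely triangular while $U$ becomes unit triangular. If a self-contained proof were preferred, one could instead mirror the induction of \Cref{thm:row_permutation}, pivoting on the first \emph{row} via column exchanges and peeling off a unit upper triangular factor on the right, but the transpose reduction is shorter and leverages the theorem just proved.
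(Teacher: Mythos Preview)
Your proposal is correct and matches the paper's own proof, which simply says to apply \Cref{thm:row_permutation} to $A^T$. You have merely spelled out the transposition details that the paper leaves implicit.
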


\begin{proof}
	Apply \Cref{thm:row_permutation} to $A^T$.
\end{proof}

\begin{definition}\label{def:rank_revealing}
	A matrix $A$ of size $m \by n$ is said to have a \emph{rank revealing LU factorization} if it can be written as $A = LU$, where $L$ is a lower triangular matrix of size $m \by r$, $U$ is an upper triangular matrix of size $r \by n$, and $r = \rk(A)$.
\end{definition}

If we allow both row and column permutations, we can always obtain such a factorization.

\begin{theorem}[Full Pivoting]\label{thm:row_column_permutation}
	For any matrix $A$ of size $m \by n$, there exist a row permutation matrix $P$ and a column permutation matrix $Q$ such that $PAQ = LU$ is a rank revealing LU factorization.
\end{theorem}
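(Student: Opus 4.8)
The plan is to argue by induction on the rank $r = \rk(A)$, which is exactly Gaussian elimination with complete pivoting. For the base case $r = 0$ I would take $A = 0$, $P = I_m$, $Q = I_n$, and let $L$ and $U$ be the empty matrices of sizes $m \by 0$ and $0 \by n$, whose product is the $m \by n$ zero matrix by convention, so that the (degenerate) factorization is trivially rank revealing.

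For the inductive step, assume the claim for every matrix of rank strictly less than $r$ and let $\rk(A) = r \ge 1$. Since $A \ne 0$ it has a nonzero entry; I would pick elementary permutations $P_0, Q_0$ bringing that entry to position $(1,1)$, set $B := P_0 A Q_0$, and write $B$ in $2\by2$ block form with scalar leading block $b_{11} \ne 0$. The standard one-step factorization then gives
\[
	B = \begin{pmatrix} b_{11} & B_{12} \\ B_{21} & B_{22} \end{pmatrix} = \begin{pmatrix} 1 & 0 \\ b_{11}^{-1} B_{21} & I \end{pmatrix} \begin{pmatrix} b_{11} & B_{12} \\ 0 & S \end{pmatrix},
\]
with $S := B_{22} - b_{11}^{-1} B_{21} B_{12}$ the $(m-1)\by(n-1)$ Schur complement. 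Since the first factor is invertible, $\rk(B) = 1 + \rk(S)$, and as $\rk(B) = \rk(A) = r$ this forces $\rk(S) = r-1$, so the induction hypothesis applies to $S$.

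That yields permutation matrices $P', Q'$ with $P' S Q' = L' U'$, where $L'$ is lower triangular of size $(m-1)\by(r-1)$ and $U'$ is upper triangular of size $(r-1)\by(n-1)$. I would then take the permutation matrices $\tilde P = \bigl(\begin{smallmatrix} 1 & 0 \\ 0 & P' \end{smallmatrix}\bigr)$ and $\tilde Q = \bigl(\begin{smallmatrix} 1 & 0 \\ 0 & Q' \end{smallmatrix}\bigr)$, expand $\tilde P B \tilde Q$ in blocks, and use $P' B_{22} Q' = L'U' + b_{11}^{-1}(P'B_{21})(B_{12}Q')$ to re-collapse it as
\[
	\tilde P B \tilde Q = \begin{pmatrix} 1 & 0 \\ b_{11}^{-1} P' B_{21} & L' \end{pmatrix} \begin{pmatrix} b_{11} & B_{12} Q' \\ 0 & U' \end{pmatrix}.
\]
The left factor is lower triangular of size $m\by r$, the right one upper triangular of size $r\by n$, and the product has rank $r$; hence $PAQ = LU$ with $P = \tilde P P_0$ and $Q = Q_0 \tilde Q$ (products of permutation matrices, so permutation matrices) is the required rank revealing factorization.

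The part needing care — more bookkeeping than genuine obstacle — is verifying that the assembled factors really are triangular of the claimed dimensions: that prepending the row $(\,b_{11}\ \ B_{12}Q'\,)$ above $(\,0\ \ U'\,)$ yields an upper triangular $r\by n$ matrix, that bordering $L'$ by a column on the left and a row on top keeps it lower triangular $m\by r$, and that every product of permutations, blocks, and Schur complements is shape-compatible. All the mathematical content sits in the single Schur-complement identity together with the rank drop $\rk(S) = r - 1$.
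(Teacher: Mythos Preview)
Your proof is correct and follows essentially the same approach as the paper's: both execute Gaussian elimination with complete pivoting, bringing a nonzero entry to the $(1,1)$ position, factoring out one step, and recursing on the Schur complement of rank $r-1$. The only cosmetic differences are that you induct on the rank $r$ while the paper inducts on $n$, and the paper interposes a middle factor $D$ in its $LDU$ form where you write the two-factor product directly; neither changes the substance of the argument.
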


\begin{proof}
	This is also a classical result. We can prove it by induction on $n$. The result is true for $n=1$.

	Assume it is true for $n-1$. If $a_{11} \neq 0$, then consider:
	\[
		A = LDU
	\]
	where
	\[
		L = \begin{pmatrix}
			a_{11} & 0 \\
			A_{21} & I
		\end{pmatrix}, \quad
		D = \begin{pmatrix}
			a_{11}^{-1} & 0 \\
			0           & S
		\end{pmatrix}, \quad
		U = \begin{pmatrix}
			a_{11} & A_{12} \\
			0      & I
		\end{pmatrix}
	\]
	$S$ has rank $r-1$ if $r = \rk(A)$. We can apply the induction hypothesis to $S$. The result for $A$ then follows.

	If $a_{11} = 0$, we have two cases. If $A$ is 0, we pick $L=U=0$. Otherwise we have some $i$ and $j$ such that $a_{ij} \neq 0$. We can permute row 1 and $i$ and column 1 and $j$. Denote by $B = \Pi_i A \Pi_j$ the permuted matrix. Since $b_{11} \neq 0$, $B$ has a rank revealing LU factorization. This proves the result for $A$.

	We have also proved that this process must terminate after $r$ steps where $r = \rk(A)$. So the LU factorization of $B$ is rank revealing.

	From \Cref{lem:oblique} (see \Cref{sec:appendix} in the Appendix), we can also prove that the remaining Schur complement is zero after $r$ steps of the LU factorization.
\end{proof}

We now consider LU factorizations without permutations. The fundamental result states that factorization is possible if and only if the matrix is \emph{strongly non-singular} (i.e., all leading principal minors are non-zero).

\begin{proposition}\label{prop:sufficient}
	Let $A$ be a non-singular matrix of size $n$. $A$ has an LU factorization if and only if all its leading principal submatrices are non-singular:
	\[
		\rk(A[1:k,1:k]) = k, \quad k = 1, \ldots, n
	\]
\end{proposition}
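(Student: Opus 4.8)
The plan is to prove the two implications separately; the reverse direction is the substantive one and proceeds by induction on $n$, mirroring the block construction already used in \Cref{thm:row_permutation}.

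For the ``only if'' direction, suppose $A = LU$ with $L$ lower triangular and $U$ upper triangular. Since $A$ is non-singular, $\det A = \det L \cdot \det U \neq 0$, so every diagonal entry of $L$ and of $U$ is nonzero. Because of the triangular structure, for each $k$ we have $A[1:k,1:k] = L[1:k,1:k]\,U[1:k,1:k]$, a product of two triangular matrices whose diagonals have no zero entries; hence $A[1:k,1:k]$ is non-singular, i.e.\ $\rk(A[1:k,1:k]) = k$.

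For the ``if'' direction, I would induct on $n$. The case $n=1$ is immediate. For the inductive step, the hypothesis applied to $k=1$ gives $a_{11}\neq 0$, so we may form
\[
	A = \begin{pmatrix} 1 & 0 \\ a_{11}^{-1}A_{21} & I \end{pmatrix}
	    \begin{pmatrix} a_{11} & A_{12} \\ 0 & S \end{pmatrix},
	\qquad S = A_{22} - a_{11}^{-1}A_{21}A_{12},
\]
with $S$ of size $n-1$. The crucial observation is that the leading $(k+1)\by(k+1)$ submatrix of $A$ has exactly the same block shape, with $(1,1)$ entry $a_{11}$, and its Schur complement with respect to $a_{11}$ is precisely $S[1:k,1:k]$; taking determinants gives $\det A[1:k+1,1:k+1] = a_{11}\det S[1:k,1:k]$. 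Since the left-hand side is nonzero by hypothesis and $a_{11}\neq0$, every leading principal submatrix of $S$ is non-singular. By the induction hypothesis, $S = L_S U_S$ with $L_S$ lower and $U_S$ upper triangular; substituting and multiplying out the block factors yields $A = LU$ with
\[
	L = \begin{pmatrix} 1 & 0 \\ a_{11}^{-1}A_{21} & L_S \end{pmatrix}, \qquad
	U = \begin{pmatrix} a_{11} & A_{12} \\ 0 & U_S \end{pmatrix},
\]
which are lower and upper triangular respectively.

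The main obstacle is the determinant identity $\det A[1:k+1,1:k+1] = a_{11}\det S[1:k,1:k]$: one must check that the Schur complement of the leading $(k+1)\by(k+1)$ block coincides with the leading $k\by k$ block of the global Schur complement $S$. This is routine but is the one place the argument needs care; everything else is bookkeeping with block-triangular matrices. Note also that non-singularity of $A$ is genuinely used in the ``only if'' direction (to force nonzero diagonals on $L$ and $U$), which is exactly the hypothesis that later sections will need to relax.
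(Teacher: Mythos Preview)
Your proof is correct and follows essentially the same approach as the paper: both directions match, with the sufficient direction proved by induction on $n$ via the Schur complement of $a_{11}$. The only cosmetic difference is that the paper verifies $S$ inherits the hypothesis by writing $A=LDU$ with full-rank triangular $L,U$ (so leading ranks of $A$ and $D$ agree), whereas you use the determinant identity $\det A[1{:}k{+}1,1{:}k{+}1]=a_{11}\det S[1{:}k,1{:}k]$ directly; these are equivalent bookkeeping choices.
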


\begin{proof}
	We prove that the condition is necessary. Assume $A = LU$:
	\[ \det(A) = \prod_{i=1}^n l_{ii} u_{ii} \]
	and $\det(A) \neq 0$. So, $l_{ii} \neq 0$ and $u_{ii} \neq 0$ for all $i$. Therefore: $\rk(A[1:k,1:k]) = k$, $k = 1, \ldots, n$.

	We prove that the condition is sufficient. We prove the result by induction on $n$. The result is true for $n=1$.

	Assume it is true for $n-1$. By assumption, $a_{11} \neq 0$. We then have the factorization:
	\[
		A = LDU
	\]
	where
	\[
		L = \begin{pmatrix}
			a_{11} & 0 \\
			A_{21} & I
		\end{pmatrix}, \quad
		D = \begin{pmatrix}
			a_{11}^{-1} & 0 \\
			0           & S
		\end{pmatrix}, \quad
		U = \begin{pmatrix}
			a_{11} & A_{12} \\
			0      & I
		\end{pmatrix}
	\]
	and $S = A_{22} - A_{21} a_{11}^{-1} A_{12}$ is the Schur complement. $L$ and $U$ are full rank. So $D$ satisfies the condition of the theorem. Consequently, $S$ also satisfies the condition of the theorem and is of size $n-1$. By the induction hypothesis, $S$ has an LU factorization. Therefore $A$ also has an LU factorization.
\end{proof}

\section{General necessary and sufficient condition for the existence of an LU factorization}\label{sec:main}

We now address the central question of this paper: determining the necessary and sufficient conditions for the existence of an LU factorization without permutations. We begin with two illustrative examples that motivate the general result.

Consider the matrix:
\[
	A = \begin{pmatrix} 0 & 1 \\ 1 & 0 \end{pmatrix}
\]
This matrix does not admit an LU factorization. We prove this by contradiction. Suppose $A = LU$. Then the diagonal entry satisfies $a_{11} = l_{11} u_{11} = 0$. This implies that either $l_{11} = 0$ or $u_{11} = 0$.
\begin{itemize}
	\item If $l_{11} = 0$, then $a_{12} = l_{11} u_{12} = 0$, which contradicts $a_{12} = 1$.
	\item If $u_{11} = 0$, then $a_{21} = l_{21} u_{11} = 0$, which contradicts $a_{21} = 1$.
\end{itemize}

In this special case, we have $A^2 = I$ and $A$ is a permutation matrix. So we note that $A A = I \cdot I$ is the factorization of \Cref{thm:row_permutation} with $P = A$, $L = I$, and $U = I$.

However, a zero pivot does not always preclude factorization. Consider the singular matrix:
\[
	A = \begin{pmatrix} 0 & 0 & 0 \\ 0 & 0 & 1 \\ 0 & 1 & 0 \end{pmatrix}
\]
Despite the zero principal minors, this matrix \emph{does} have an LU factorization. We can derive it by permuting $A$ into a factorizable form and then reversing the permutations. Apply a cyclic row shift $(1,2,3) \to (2,3,1)$ via $P$ and swap columns 1 and 3 via $Q$:
\[
	PAQ = \begin{pmatrix} 1 & 0 & 0 \\ 0 & 1 & 0 \\ 0 & 0 & 0 \end{pmatrix}
	= \begin{pmatrix} 1 & 0 \\ 0 & 1 \\ 0 & 0 \end{pmatrix}
	\begin{pmatrix} 1 & 0 & 0 \\ 0 & 1 & 0 \end{pmatrix}
\]
We can recover a factorization for $A$ by writing $A = P^{-1} (PAQ) Q^{-1}$:
\[
	A = P^{-1} \left[ \begin{pmatrix} 1 & 0 \\ 0 & 1 \\ 0 & 0 \end{pmatrix}
		\begin{pmatrix} 1 & 0 & 0 \\ 0 & 1 & 0 \end{pmatrix} \right] Q^{-1} =
	\begin{pmatrix} 0 & 0 \\ 1 & 0 \\ 0 & 1 \end{pmatrix}
	\begin{pmatrix} 0 & 0 & 1 \\ 0 & 1 & 0 \end{pmatrix}
\]
This yields a valid, rank-revealing LU factorization with no pivoting. Another valid LU factorization is:
\[
	A = \begin{pmatrix} 0 & 0 \\ 0 & 1 \\ 1 & 0 \end{pmatrix}
	\begin{pmatrix} 0 & 1 & 0 \\ 0 & 0 & 1 \end{pmatrix}
\]

We can generalize this result using block matrices. Let $0_n$ denote the $n \times n$ zero matrix. If $B$ is a factorizable matrix such that $B=LU$, we can construct larger factorizable matrices $A$ as follows:
\[
	A = \begin{pmatrix}
		0_n & 0_n & 0_n \\
		0_n & 0_n & C   \\
		0_n & B   & D
	\end{pmatrix}
	=
	\begin{pmatrix}
		0_n & 0_n \\
		C   & 0_n \\
		D   & L
	\end{pmatrix}
	\begin{pmatrix}
		0_n & 0_n & I_n \\
		0_n & U   & 0_n
	\end{pmatrix}
	=
	\begin{pmatrix}
		0_n & 0_n \\
		CX  & 0_n \\
		DX  & L
	\end{pmatrix}
	\begin{pmatrix}
		0_n & 0_n & X^{-1} \\
		0_n & U   & 0_n
	\end{pmatrix}
\]
for any invertible matrix $X$.

Alternatively, if $C = LU$:
\[
	A =
	\begin{pmatrix}
		0_n & 0_n \\
		0_n & L   \\
		I_n & 0_n
	\end{pmatrix}
	\begin{pmatrix}
		0_n & B   & D \\
		0_n & 0_n & U
	\end{pmatrix}
	=
	\begin{pmatrix}
		0_n & 0_n \\
		0_n & L   \\
		X   & 0_n
	\end{pmatrix}
	\begin{pmatrix}
		0_n & X^{-1} B & X^{-1} D \\
		0_n & 0_n      & U
	\end{pmatrix}
\]

These examples demonstrate that the existence of a factorization is linked to the relationship between the ranks (or nullities) of specific sub-blocks. We formalize this observation in \Cref{prop:rank}. We will show that \Cref{prop:rank} below is a necessary and sufficient condition for the existence of an LU factorization.

\begin{property}\label{prop:rank}
	Matrix $A$ is square of size $n$. For all $k = 1, \ldots, n$:
	\begin{equation}
		\label{eq:cond1}
		\nl(A[1:k,1:k]) \le \nl(A[:,1:k]) + \nl({A[1:k,:]}^T)
	\end{equation}
\end{property}

We informally describe the main strategy before stating the results formally. The primary obstacle to constructing an LU factorization is the occurrence of a zero pivot. Standard Gaussian elimination resolves this by permuting rows (partial pivoting) or both rows and columns (full pivoting) to bring a non-zero entry to the diagonal.

However, arbitrary permutations destroy the triangular structure required for the specific factorization $A=LU$. If we write the permuted factorization as $P A Q = L U$, recovering $A$ yields $A = (P^{-1} L) (U Q^{-1})$. In general, $P^{-1} L$ is not lower triangular and $U Q^{-1}$ is not upper triangular.

To maintain the triangular factors of $A$, we must restrict our pivoting strategy. We rely on the observation (detailed in \Cref{sec:appendix}) that a zero column (or row) in the Schur complement corresponds to a column (or row) in the original matrix that is linearly dependent on the preceding columns (or rows).

Our constructive proof effectively "sorts" the matrix indices. Whenever we encounter a zero pivot, the condition in \Cref{prop:rank} guarantees that at least one of the following two scenarios must hold:
\begin{enumerate}
	\item The current column in the Schur complement is zero (the column is linearly dependent). We permute this column rightward.
	\item The current row in the Schur complement is zero (the row is linearly dependent). We permute this row downward.
\end{enumerate}

Crucially, because we only permute \emph{zero} columns or rows (relative to the active Schur complement), the triangular structure of the resulting factors is preserved. When we reverse these permutations, the factor $P^{-1} L$ remains lower triangular and $U Q^{-1}$ remains upper triangular.

Condition~\eqref{eq:cond1} is the precise safeguard that ensures we never reach a "dead end" where the pivot is zero but neither the column nor the row can be safely permuted away (i.e., a case where the rank has not been exhausted, but the leading block is rank-deficient in a way that blocks factorization).

Consequently, this process produces a rank-revealing factorization where the indices are partitioned based on linear independence:
\begin{itemize}
	\item Linearly Independent Vectors: If a column $j_0$ of $A$ is linearly independent of the previous columns, it is retained (or moved) into the leading rank-$r$ block ($j \le r$).
	\item Linearly Dependent Vectors: If a column $j_0$ of $A$ is linearly dependent on the previous columns, it is deferred to the trailing block ($j > r$).
\end{itemize}
A symmetric logic applies to the rows of $A$.

The main existence results are summarized in \Cref{tab:main}.

\begin{table}[htbp]
	\centering
	\begin{tabular}{llp{4cm}p{6cm}}
		\toprule
		Factorization Type & Equation & Conditions for Existence & Limitations \\
		\midrule
		Non-singular LU & $A=LU$ & Strongly non-singular (all leading minors $\neq 0$). & Does not apply to singular matrices. \\
		Partial Pivoting & $PA=LU$ & \raggedright Always exists for any square $A$. & \raggedright Permutes rows, altering row-index meaning.\tabularnewline
		Full Pivoting & $PAQ=LU$ & \raggedright Always exists; rank-revealing. & \raggedright Permutes rows and columns, destroying all index structure.\tabularnewline
		Generalized LU & $A=LU$ & Subject of this work. & \raggedright Handles singular/rank-deficient cases without $P$, $Q$.\tabularnewline
		\bottomrule
	\end{tabular}
	\vspace{5pt}
	\caption{Summary of the main factorization types and their conditions for existence.}\label{tab:main}
\end{table}

\subsubsection*{Informal description: producer-sensor duality}

We can form a mental model of \Cref{prop:rank} by considering the following scenario which represents the class of matrices that have an LU factorization. Assume the columns of $A$ correspond to producer points $j$ and the rows of $A$ correspond to sensor points $i$.

We say the producer $j$ is \emph{observable} by the current sensor network $1:i$ if $A[1:i,j]$ adds new information (is linearly independent) relative to previous producers $A[1:i,1:j-1]$. Conversely, we say the sensor $i$ is \emph{informative} regarding the current producers $1:j$ if its measurement $A[i,1:j]$ is linearly independent of previous measurements $A[1:i-1,1:j]$.

The existence of an LU factorization implies a compatibility between observability and informativeness. $A$ admits a factorization if and only if every "blind spot" (rank deficiency) in a leading block corresponds to a global redundancy. Specifically, if the interaction between the first $k$ sensors and $k$ producers is rank-deficient, this deficiency must be fully accounted for by producers that are globally redundant (dependent on previous columns) or sensors that are globally redundant (dependent on previous rows). The factorization is impossible only in the "crossed" scenario, or "deadlock": where producer $k$ carries a novel signal that current sensors miss (but future sensors see), while sensor $k$ performs a novel measurement that misses current signals (but sees future ones).

We now provide the formal proof of the main results, \Cref{thm:main,thm:main2}.

\subsection{Rank theorems}

We recall rank theorems that will be useful in the proof of \Cref{thm:main}.

\begin{theorem}\label{thm:rank-nullity}
	The rank-nullity theorem states that for any matrix $A$ of size $m \times n$:
	\[
		\rk(A) + \nl(A) = n
	\]
\end{theorem}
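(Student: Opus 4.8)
The plan is to pass to the associated linear map and build a basis of $\mathbb{R}^n$ adapted to the null space. Let $T\colon \mathbb{R}^n \to \mathbb{R}^m$ be the map $x \mapsto Ax$. By definition $\ker(T) = \{x : Ax = 0\}$ has dimension $\nl(A)$, and the image of $T$ is the span of the columns of $A$, so it has dimension $\rk(A)$. Hence the theorem is equivalent to showing $\dim \ker(T) + \dim \operatorname{im}(T) = n$.

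Set $k = \nl(A)$ and pick a basis $v_1, \dots, v_k$ of $\ker(T)$. Since any linearly independent set in a finite-dimensional space extends to a basis, choose $w_1, \dots, w_{n-k}$ so that $v_1, \dots, v_k, w_1, \dots, w_{n-k}$ is a basis of $\mathbb{R}^n$. The crux is to show that $T(w_1), \dots, T(w_{n-k})$ form a basis of $\operatorname{im}(T)$; once this is established, $\rk(A) = \dim \operatorname{im}(T) = n - k = n - \nl(A)$, which is the claim. Spanning is immediate: for $x = \sum_i a_i v_i + \sum_j b_j w_j$ we have $T(x) = \sum_j b_j T(w_j)$ since each $v_i$ is in the kernel, and every element of $\operatorname{im}(T)$ has this form. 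For linear independence, suppose $\sum_j c_j T(w_j) = 0$; then $\sum_j c_j w_j \in \ker(T)$, so $\sum_j c_j w_j = \sum_i d_i v_i$ for suitable scalars $d_i$, and rearranging yields a linear dependence among the basis vectors $v_1, \dots, v_k, w_1, \dots, w_{n-k}$, forcing all $c_j = 0$.

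There is no genuine obstacle here; the only point to pin down is the working definition of $\rk(A)$. I take it to be the dimension of the column space, so that "$\operatorname{im}(T)$ has dimension $\rk(A)$" holds by definition; if rank is instead introduced via row echelon form or as row rank, a one-line remark (or an appeal to the equality of row and column rank) bridges the gap. A fully computational alternative, which I would mention but not adopt as the main argument, is to reduce $A$ to reduced row echelon form: the pivot columns index a basis of the column space and number $\rk(A)$, the free columns are in bijection with a basis of the null space obtained from the free variables and number $\nl(A)$, and together they partition the $n$ columns.
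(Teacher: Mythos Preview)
Your argument is the standard basis-extension proof of the rank--nullity theorem and is correct; the only mild caveat you already flag, namely the convention that $\rk(A)$ denotes the column rank, is consistent with the paper's usage. The paper itself does not supply a proof of this statement---it is merely recalled as a classical fact---so there is no alternative approach to compare against.
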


We now state Sylvester's inequality for the rank of a product of matrices.

\begin{lemma}\label{lem:rank}
	Consider $A = BC$. Then:
	\[
		\rk(A) \le \min(\rk(B), \rk(C)), \quad
		\rk(B) + \rk(C) - k \le \rk(A)
	\]
	where $k$ is the number of columns of $B$ and rows of $C$.
\end{lemma}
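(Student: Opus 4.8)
The plan is to handle the two inequalities separately. For the upper bound $\rk(A) \le \min(\rk(B),\rk(C))$, I would argue by subspace containment: every column of $A = BC$ is a linear combination of the columns of $B$, so $\operatorname{im}(A) \subseteq \operatorname{im}(B)$ and hence $\rk(A) \le \rk(B)$. Applying the same observation to $A^T = C^T B^T$ gives $\operatorname{im}(A^T) \subseteq \operatorname{im}(C^T)$, so $\rk(A) = \rk(A^T) \le \rk(C^T) = \rk(C)$. Combining the two yields $\rk(A) \le \min(\rk(B),\rk(C))$.

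For the lower bound (Sylvester's inequality), I would view $B$ and $C$ as linear maps $B:\mathbb{R}^k \to \mathbb{R}^m$ and $C:\mathbb{R}^n \to \mathbb{R}^k$, so that $\operatorname{im}(A) = B\big(\operatorname{im}(C)\big)$. Consider the restriction $B|_{\operatorname{im}(C)} : \operatorname{im}(C) \to \mathbb{R}^m$: its image is exactly $\operatorname{im}(A)$ and its kernel is $\ker(B) \cap \operatorname{im}(C)$. Applying the rank-nullity theorem (\Cref{thm:rank-nullity}) to this restricted map gives
\[
	\rk(A) = \dim \operatorname{im}(C) - \dim\big(\ker(B)\cap\operatorname{im}(C)\big) = \rk(C) - \dim\big(\ker(B)\cap\operatorname{im}(C)\big).
\]
Then I would bound $\dim\big(\ker(B)\cap\operatorname{im}(C)\big) \le \dim\ker(B) = \nl(B) = k - \rk(B)$ (the last equality again by rank-nullity, since $B$ has $k$ columns), and substitute to obtain $\rk(A) \ge \rk(C) - (k - \rk(B)) = \rk(B) + \rk(C) - k$.

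There is no deep difficulty here; the points requiring care are (i) correctly tracking which ambient space has dimension $k$ versus $n$ when invoking rank-nullity, and (ii) justifying that the image of $B|_{\operatorname{im}(C)}$ is precisely $\operatorname{im}(BC)$ and not something larger. An alternative, purely matrix-theoretic route is available by performing elementary block row and column operations on $\begin{pmatrix} BC & 0 \\ 0 & I_k \end{pmatrix}$ to bring a copy of $B$ and of $C$ into complementary blocks, but the rank-nullity argument above is shorter and relies only on results already established in the excerpt.
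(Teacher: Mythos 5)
Your proof is correct. Note, however, that the paper does not prove \Cref{lem:rank} at all: it is stated as a classical result (Sylvester's inequality) and used as a black box, so there is no paper proof to compare against — your argument simply supplies the omitted justification. Both halves are sound: the upper bound by column-space containment plus the transpose trick, and the lower bound by restricting $B$ to $\operatorname{im}(C)$, noting that the restricted map has image $\operatorname{im}(BC)$ and kernel $\ker(B)\cap\operatorname{im}(C)$, and then bounding $\dim\bigl(\ker(B)\cap\operatorname{im}(C)\bigr)\le \nl(B)=k-\rk(B)$. The only caveat is cosmetic: \Cref{thm:rank-nullity} is stated for matrices, whereas you apply it to a linear map restricted to the subspace $\operatorname{im}(C)$; this is the standard linear-map form of rank–nullity and is harmless, but if you want to rely literally on the matrix statement you can choose a matrix $C_0$ whose columns form a basis of $\operatorname{im}(C)$ and apply the matrix version to $BC_0$, which has $\rk(C)$ columns. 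Your dimension bookkeeping ($B:\mathbb{R}^k\to\mathbb{R}^m$, $C:\mathbb{R}^n\to\mathbb{R}^k$, so $\nl(B)=k-\rk(B)$) is exactly right, which is the one place such proofs usually go wrong.
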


\begin{corollary}\label{cor:nullity}
	If $A$, $B$, and $C$ are square matrices then:
	\[
		\nl(A) \le \nl(B) + \nl(C)
	\]
\end{corollary}

\begin{proof}
	Assume $A = BC$ where $A$, $B$, $C$ are $n \times n$ matrices. From \Cref{lem:rank}:
	\[ \rk(A) \ge \rk(B) + \rk(C) - n \]
	Using \Cref{thm:rank-nullity}, $\rk(M) = n - \nl(M)$:
	\[ n - \nl(A) \ge (n - \nl(B)) + (n - \nl(C)) - n \]
	Simplifying: $\nl(A) \le \nl(B) + \nl(C)$.
\end{proof}

\subsection{Necessary condition}

We show that \Cref{prop:rank} is a necessary condition for the existence of an LU factorization.

Assume that $A$, a square matrix of size $n$, has an LU factorization:
\[ A = LU \]
Then:
\begin{align*}
	A[1:k,:]   & = L[1:k,1:k] U[1:k,:], \\
	A[:,1:k]   & = L[:,1:k] U[1:k,1:k], \\
	A[1:k,1:k] & = L[1:k,1:k] U[1:k,1:k]
\end{align*}
Using \Cref{lem:rank} and \Cref{cor:nullity}, we have:
\begin{align*}
	\rk(A[1:k,:])   & \le \rk(L[1:k,1:k]), \\
	\rk(A[:,1:k])   & \le \rk(U[1:k,1:k]), \\
	\nl(A[1:k,1:k]) & \le \nl(L[1:k,1:k]) + \nl(U[1:k,1:k])
\end{align*}
Using \Cref{thm:rank-nullity}, we get:
\[
	\nl(A[1:k,1:k]) \le \nl(A[:,1:k]) + \nl({A[1:k,:]}^T)
\]
So \Cref{prop:rank} is a necessary condition for the existence of an LU factorization.

\subsection{Sufficient condition}

We will now prove the main theorem:
\begin{theorem}\label{thm:main}
	Matrix $A$ satisfies \Cref{prop:rank} if and only if it has an LU factorization.
\end{theorem}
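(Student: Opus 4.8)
The necessity direction was established above, so the plan is to prove the converse: if the $n\times n$ matrix $A$ satisfies \Cref{prop:rank}, then $A=LU$ with $L$ lower triangular and $U$ upper triangular. I would proceed by strong induction on $n$; the case $n=1$ is immediate. For the inductive step I distinguish two cases according to the pivot $a_{11}$.

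\emph{Case $a_{11}\neq 0$.} Here I reuse the block identity $A=LDU$ of \Cref{prop:sufficient}, with $D=\mathrm{diag}(a_{11}^{-1},S)$, $S$ the Schur complement, and $L,U$ invertible triangular. Because $L$ is lower triangular and $U$ upper triangular, $A[1:k,1:k]=L[1:k,1:k]\,D[1:k,1:k]\,U[1:k,1:k]$ and the analogous column-block and row-block identities hold with invertible bordering factors; exploiting the block-diagonal form of $D$ and the nullity bookkeeping already used in the necessity proof gives $\nl(A[1:k,1:k])=\nl(S[1:k-1,1:k-1])$, $\nl(A[:,1:k])=\nl(S[:,1:k-1])$ and $\nl({A[1:k,:]}^T)=\nl({S[1:k-1,:]}^T)$. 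Hence \Cref{prop:rank} for $A$ at index $k\geq 2$ is exactly \Cref{prop:rank} for $S$ at index $k-1$, while at $k=1$ it is vacuous. So the $(n-1)\times(n-1)$ matrix $S$ satisfies \Cref{prop:rank}; by the induction hypothesis $S=L_S U_S$, and then $A=\bigl(L\,\mathrm{diag}(1,L_S)\bigr)\bigl(\mathrm{diag}(1,U_S)\,U\bigr)$ is the desired factorization.

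\emph{Case $a_{11}=0$.} Evaluating \Cref{prop:rank} at $k=1$ gives $1=\nl(a_{11})\leq \nl(A[:,1])+\nl({A[1,:]}^T)$, so the first column of $A$ is zero or the first row of $A$ is zero. Since $A=LU$ if and only if $A^T=U^T L^T$, with $U^T$ lower and $L^T$ upper triangular, and since \Cref{prop:rank} is invariant under transposition — indeed $\nl(A[1:k,1:k])=\nl(A^T[1:k,1:k])$, $\nl(A[:,1:k])=\nl({A^T[1:k,:]}^T)$ and $\nl({A[1:k,:]}^T)=\nl(A^T[:,1:k])$ — it suffices to treat the case where the first column of $A$ vanishes. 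Then every factorization must have $u_{11}=0$; I would look for one with $l_{11}=1$, first column of $L$ equal to $(1,v^T)^T$ for a vector $v$ still to be chosen, and first row of $U$ equal to $A[1,:]$. Matching the remaining blocks reduces the claim to producing a $v\in\mathbb{R}^{n-1}$ for which the $(n-1)\times(n-1)$ matrix $\bar A-v\,A[1,2:n]$, with $\bar A:=A[2:n,2:n]$, satisfies \Cref{prop:rank}; the natural candidate is the multiplier vector of a single elimination step using the first nonzero entry of the row $A[1,2:n]$ as pivot, which makes the corresponding column of $\bar A-v\,A[1,2:n]$ vanish — the transpose of the Schur step above. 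Equivalently, one may phrase the whole case as permuting the zero first column of $A$ to the last position, factoring the result, and undoing the permutation: this last move preserves triangularity precisely because the column moved is zero, and the factorization of the permuted matrix involves one fewer nonzero column.

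The step I expect to be the main obstacle is verifying that the reduced matrix in the $a_{11}=0$ case again satisfies \Cref{prop:rank} at \emph{every} index. The Schur-complement case is essentially automatic, but the zero-pivot case needs the structural input of \Cref{lem:oblique} — a zero column (respectively row) of a Schur complement is precisely a column (respectively row) of the original matrix that is linearly dependent on its predecessors — together with a careful accounting of how the three nullities in \eqref{eq:cond1} change when a dependent column or row is deferred past the rank-revealing block. Keeping this bookkeeping consistent, in particular for an interior column rather than only the leading one, is exactly what the informal "sort the indices by linear independence" description encodes, and it is also what underlies the rank-revealing refinement in \Cref{thm:main2}.
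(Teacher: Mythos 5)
Your treatment of the $a_{11}\neq 0$ case and the reduction by transposition are fine, and your rank-one ansatz in the case $A[:,1]=0$, $A[1,:]\neq 0$ is in fact equivalent to the paper's construction: with $j$ the first index such that $A[1,j]\neq 0$ and $v=A[2{:}n,j]/A[1,j]$, the reduced matrix $\bar A - vA[1,2{:}n]$ coincides exactly with the Schur complement of $b_{11}$ in $B=A\Pi_j$, where $\Pi_j$ swaps columns $1$ and $j$. But the step you defer as ``the main obstacle'' --- proving that this reduced matrix (equivalently, that $B$) still satisfies \Cref{prop:rank} at every index --- is the entire substance of the sufficiency proof. The paper's argument for \Cref{thm:main} consists precisely of this nullity bookkeeping, split into the regimes $k<j$ and $k\ge j$, after which the nonzero-pivot case applies to $B$ and triangularity of $U\Pi_j$ is recovered because the zero column parked at position $j$ allows column $j$ of $U$ to be chosen zero. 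Without that verification the induction does not close; and \Cref{lem:oblique}, which you invoke as the structural input, is not what is needed here (it enters the rank-revealing refinement of \Cref{thm:main2}, not this computation).

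Beyond the deferred step, two parts of your plan are actually wrong. First, the subcase $A[:,1]=0$ and $A[1,:]=0$ with $A\neq 0$ is not covered: your ansatz $l_{11}=1$, $U[1,:]=A[1,:]$ forces the reduced matrix to be $A[2{:}n,2{:}n]$ (the choice of $v$ is irrelevant since $A[1,2{:}n]=0$), and that block need not satisfy \Cref{prop:rank}. The paper's own example $A=\begin{pmatrix}0&0&0\\0&0&1\\0&1&0\end{pmatrix}$ admits an LU factorization, yet $A[2{:}3,2{:}3]=\begin{pmatrix}0&1\\1&0\end{pmatrix}$ admits none; any factorization of this $A$ must have $l_{11}=0$, and the paper handles this case by first swapping column $1$ with the first nonzero column to reduce to the row-zero case. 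Second, your ``equivalent'' formulation --- permute the zero first column to the \emph{last} position, factor, and undo --- is false. Take $A=\begin{pmatrix}0&0&1\\0&1&0\\0&0&0\end{pmatrix}$, which satisfies \Cref{prop:rank} and factors as $A=\begin{pmatrix}1&0&0\\0&1&0\\0&0&0\end{pmatrix}\begin{pmatrix}0&0&1\\0&1&0\\0&0&0\end{pmatrix}$; moving its zero first column to the end gives $\begin{pmatrix}0&1&0\\1&0&0\\0&0&0\end{pmatrix}$, which violates \Cref{prop:rank} at $k=1$ and has no LU factorization at all. The legal move is the transposition with exactly the column $j$ carrying the first nonzero entry of row $1$, so that a nonzero pivot arrives at position $(1,1)$; deciding which internal permutations preserve \Cref{prop:rank} is the delicate point, and together with the missing bookkeeping this constitutes a genuine gap in the proposal.
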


\begin{proof}
	We prove the result by induction on the size of $A$. The result is true for $n=1$.

	Assume it is true for size $n-1$.

	Assume $a_{11} \neq 0$. We have the factorization:
	\[
		A = LDU
	\]
	where
	\[
		L = \begin{pmatrix}
			a_{11} & 0 \\
			A_{21} & I
		\end{pmatrix}, \quad
		D = \begin{pmatrix}
			a_{11}^{-1} & 0 \\
			0           & S
		\end{pmatrix}, \quad
		U = \begin{pmatrix}
			a_{11} & A_{12} \\
			0      & I
		\end{pmatrix}
	\]
	Since $L$ and $U$ are triangular and full rank, $D$ satisfies \Cref{prop:rank} if and only if $A$ does. Consequently, $S$ also satisfies \Cref{prop:rank} and is of size $n-1$. Moreover $S$ has rank $r-1$ if $r = \rk(A)$. By the induction hypothesis, $S$ has an LU factorization. The result for $A$ follows.

	Assume $a_{11} = 0$. From \Cref{prop:rank}, we have either:
	\begin{enumerate}
		\item $A[:,1] = 0$, $A[1,:] \ne 0$, or
		\item $A[1,:] = 0$, $A[:,1] \ne 0$, or
		\item $A[:,1] = 0$, $A[1,:] = 0$.
	\end{enumerate}

	Case 1: $A[:,1] = 0$. We can find the smallest $j$ such that $A[1,j] \neq 0$. Pivot column 1 and $j$. Denote by $B = A \Pi_j$ the permuted matrix (with $\Pi_j^2 = I$). We prove that $B$ satisfies \Cref{prop:rank}.

	Consider $k < j$. We have
	\begin{equation} \label{eq:1}
		\nl(A[1:k,1:k]) \le \nl(A[:,1:k]) + \nl({A[1:k,:]}^T)
	\end{equation}
	Since column 1 of $A$ is zero, we have:
	\[
		\nl(A[1:k,1:k]) = \nl(A[1:k,2:k]) + 1
	\]
	We have
	\[
		\nl(B[1:k,2:k]) = \nl(A[1:k,2:k])
	\]
	Since the only non-zero entry in row 1 of $B[1:k,1:k]$ is in column 1, $B[1:k,1]$ is linearly independent of $B[1:k,2:k]$. Therefore, we have
	\[
		\nl(B[1:k,1:k]) = \nl(B[1:k,2:k]) = \nl(A[1:k,2:k]) = \nl(A[1:k,1:k]) - 1
	\]
	By a similar reasoning, we have
	\[
		\nl(B[:,1:k]) = \nl(A[:,1:k]) - 1
	\]
	The column pivoting does not affect the rank of the rows, so
	\[
		\nl({B[1:k,:]}^T) = \nl({A[1:k,:]}^T)
	\]
	Therefore $B$ satisfies \Cref{prop:rank} for $k < j$.

	For $k \ge j$, the pivoting does not affect the ranks in \Cref{eq:1}, so $B$ also satisfies \Cref{prop:rank} for $k \ge j$.

	So $B$ satisfies \Cref{prop:rank}.

	Since $b_{11} \neq 0$, $B$ has an LU factorization: $B = LU$. Column $j$ of $B$ is zero. If column $j$ of $U$ is non-zero, we can zero it out without affecting any column in $LU$. So we can always choose $U$ such that column $j$ is zero. We have:
	\[ A = B \Pi_j = L U \Pi_j \]
	and $U \Pi_j$ is still upper triangular because $\Pi_j$ only permutes column 1 and $j$. Therefore $A$ has an LU factorization.

	Case 2: $A[1,:] = 0$. Consider $B = A^T$. $B$ corresponds to Case 1, so $B$ has an LU factorization: $B = L U$. Therefore:
	\[ A = B^T = U^T L^T \]
	is an LU factorization of $A$.

	Case 3: If $A$ is 0, it has an LU factorization with $L=U=0$. Otherwise, if $A \neq 0$, let's denote by $j$ the smallest index such that column $j$ is non zero. Let's denote by $i > 1$ the smallest index such that $a_{ij} \neq 0$. Let's pivot column 1 and $j$ and denote by $B = A \Pi_j$ the permuted matrix. $B$ satisfies Case 2, so it has an LU factorization: $B = L U$. Column $j$ of $U$ is zero. Therefore:
	\[ A = B \Pi_j = L U \Pi_j \]
	is an LU factorization of $A$.

	We have also proved that this process must terminate after $r$ steps where $r = \rk(A)$. So the LU factorization of $B$ is rank revealing.
\end{proof}

We now prove a stronger theorem. Please compare this result with \Cref{thm:row_column_permutation}.

\begin{theorem}\label{thm:main2}
	Let $A$ be a matrix of size $n$ with rank $r \le n$. If $A$ satisfies \Cref{prop:rank}, then there exist a row permutation $P_r$ and a column permutation $P_c$ such that the matrix
	\[ B = P_r^{-1} A P_c^{-1} \]
	has strongly non-singular leading principal submatrices $B[1:k,1:k]$ for $1 \le k \le r$. Consequently, the leading principal block admits an LU factorization $B[1:r,1:r] = L_r U_r$.

	Define the rank-revealing factors of $B$ as:
	\[
		L_B = \begin{pmatrix} L_r \\ B[r+1:n, 1:r] U_r^{-1} \end{pmatrix}
		\quad \text{and} \quad
		U_B = \begin{pmatrix} U_r & L_r^{-1} B[1:r, r+1:n] \end{pmatrix}
	\]
	Then $B = L_B U_B$. Furthermore, $A$ admits a rank-revealing factorization
	\[ A = L U \]
	where $L = P_r L_B$ is lower triangular and $U = U_B P_c$ is upper triangular.

	Moreover, we have the following sparsity results for $L$. Let $i_0$ be the row index in $A$ corresponding to the logical row index $i$ in $B$ (i.e., $e_{i_0} = P_r e_i$). Then:
	\begin{itemize}
		\item If $i \le r$, then $i_0 \ge i$ and $L[i_0, i+1:] = 0$. Row $i_0$ of $A$ is linearly independent of the previous rows.
		\item If $i > r$, then $i_0 \le i$; if $i_0 \le r$, then $L[i_0,i_0:] = 0$. Row $i_0$ of $A$ is a linear combination of the previous rows.
	\end{itemize}
	Similarly for $U$, let $j_0$ be the column index in $A$ corresponding to the logical column index $j$ in $B$ (i.e., $e_{j_0} = P_c^T e_j$). Then:
	\begin{itemize}
		\item If $j \le r$, then $j_0 \ge j$ and $U[j+1:, j_0] = 0$. Column $j_0$ of $A$ is linearly independent of the previous columns.
		\item If $j > r$, then $j_0 \le j$; if $j_0 \le r$, then $U[j_0:, j_0] = 0$. Column $j_0$ of $A$ is a linear combination of the previous columns.
	\end{itemize}
\end{theorem}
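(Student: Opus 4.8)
The plan is to leverage \Cref{thm:main} together with its constructive proof, which already "sorts" the indices: every permutation applied in that induction swaps only a \emph{zero} column or row of the active Schur complement. I would first run the construction of \Cref{thm:main} on $A$, collecting all the elementary column swaps into a single column permutation $P_c$ and all the elementary row swaps into a single row permutation $P_r$, and set $B = P_r^{-1} A P_c^{-1}$. The key claim to verify is that $B$ has strongly non-singular leading principal submatrices of orders $1$ through $r$: this follows because at step $k \le r$ the algorithm arranges (via the guaranteed column or row swap) that the pivot $b_{kk}$ is nonzero, and by \Cref{prop:rank} applied iteratively (as in the case analysis of \Cref{thm:main}) the process does not terminate before step $r$. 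Since $B[1:r,1:r]$ is strongly non-singular, \Cref{prop:sufficient} gives $B[1:r,1:r] = L_r U_r$ with $L_r, U_r$ invertible triangular.

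Next I would check the block factorization $B = L_B U_B$ directly. Writing $B$ in the $2\times 2$ block form with leading block $B[1:r,1:r]$, the product $L_B U_B$ reproduces: the $(1,1)$ block $L_r U_r = B[1:r,1:r]$; the $(1,2)$ block $L_r (L_r^{-1} B[1:r,r+1:n]) = B[1:r,r+1:n]$; the $(2,1)$ block $(B[r+1:n,1:r] U_r^{-1}) U_r = B[r+1:n,1:r]$; and the $(2,2)$ block $B[r+1:n,1:r] U_r^{-1} L_r^{-1} B[1:r,r+1:n] = B[r+1:n,1:r] B[1:r,1:r]^{-1} B[1:r,r+1:n]$. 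This last must equal $B[r+1:n,r+1:n]$, i.e. the Schur complement of the leading $r\times r$ block is zero; this is exactly the statement (invoked in the proof of \Cref{thm:row_column_permutation}) that after $r = \rk(B)$ steps the remaining Schur complement vanishes, which here follows from $\rk(B[1:r,1:r]) = r = \rk(B)$ via \Cref{lem:rank}. Then $A = P_r B P_c = (P_r L_B)(U_B P_c)$, and it remains to argue that $L = P_r L_B$ is lower triangular and $U = U_B P_c$ is upper triangular; this is the point where the "only permute zero rows/columns" discipline matters, and it mirrors the argument at the end of Case 1 and Case 2 in the proof of \Cref{thm:main}, where $U \Pi_j$ stays upper triangular precisely because column $j$ of $U$ was arranged to be zero.

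Finally, the sparsity bullets. For $i \le r$ with row index $i_0 = P_r e_i$: since row-swaps in the construction only move a row \emph{downward} (a later zero-row of the Schur complement is never swapped into an earlier independent slot), one gets $i_0 \ge i$; the vanishing $L[i_0,i+1:] = 0$ follows because $L_B$'s first $r$ columns are $\begin{pmatrix} L_r \\ B[r+1:n,1:r]U_r^{-1}\end{pmatrix}$ and $L_r$ is lower triangular of size $r$, so entries past column $i$ in logical row $i$ are zero, and $P_r$ maps logical row $i$ to physical row $i_0$ without altering which columns are occupied. The linear-independence statement for row $i_0$ of $A$ is the invariant that the leading $r$ rows of $B$ are exactly the "informative" rows, which is how the algorithm selects them (the zero-row condition of \Cref{prop:rank} identifies dependent rows and defers them). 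The $i > r$ case is the complementary statement: these are the rows the algorithm deemed dependent, so $i_0 \le i$, and when $i_0 \le r$ the corresponding physical row sits in the leading block but was a zero row of the Schur complement at that stage, forcing $L[i_0, i_0:] = 0$. The column statements are obtained by the same reasoning applied to $A^T$ (as in Case 2 of \Cref{thm:main}), swapping the roles of $L$ and $U$, $P_r$ and $P_c$.

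I expect the main obstacle to be making the "collect the swaps" step rigorous: the construction in \Cref{thm:main} is recursive on Schur complements, so I must track how a swap performed deep in the recursion lifts to a global permutation of $A$, and confirm that the accumulated $P_r$ and $P_c$ genuinely make $B[1:k,1:k]$ strongly non-singular for \emph{all} $k \le r$ simultaneously — not just that each individual step had a nonzero pivot. The cleanest route is probably to re-cast the induction directly in terms of $B$: assume the permutations for the size-$(n-1)$ Schur complement exist by the induction hypothesis, and show they extend to size $n$ by prepending the first-step swap; the bookkeeping of index offsets ($i_0 \ge i$ versus $i_0 \le i$, and the threshold at $r$) is then the fiddly-but-routine part.
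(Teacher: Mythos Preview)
Your proposal is correct and follows essentially the same route as the paper: an induction on $n$ that mirrors the case analysis of \Cref{thm:main}, tracking the accumulated column/row swaps and verifying the sparsity claims through the ``only permute zero rows/columns'' invariant. The direct induction you settle on at the end (extend the size-$(n-1)$ permutations to size $n$ by prepending the first-step swap) is exactly what the paper does; your explicit block-multiplication check of $B = L_B U_B$ and the Schur-complement-vanishes argument are details the paper leaves implicit but are correct and helpful.
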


\begin{proof}
	We follow a proof similar to the one for \Cref{thm:main}.

	We do an induction on the size of $A$. The result is true for $n=1$.

	Assume it is true for size $n-1$. If $a_{11} \neq 0$, we have the factorization:
	\[
		A = LDU
	\]
	where
	\[
		L = \begin{pmatrix}
			a_{11} & 0 \\
			A_{21} & I
		\end{pmatrix}, \quad
		D = \begin{pmatrix}
			a_{11}^{-1} & 0 \\
			0           & S
		\end{pmatrix}, \quad
		U = \begin{pmatrix}
			a_{11} & A_{12} \\
			0      & I
		\end{pmatrix}
	\]
	We apply the induction hypothesis to $S$. We can define $P_r$ and $P_c$ in terms of the permutations for $S$:
	\[ P_r = \begin{pmatrix}
			1 & 0       \\
			0 & P_{r,S}
		\end{pmatrix}, \quad
		P_c = \begin{pmatrix}
			1 & 0       \\
			0 & P_{c,S}
		\end{pmatrix}
	\]
	The sparsity results follow directly from the induction hypothesis.

	Assume $a_{11} = 0$. From \Cref{prop:rank}, we have:
	\begin{enumerate}
		\item $A[:,1] = 0$, $A[1,:] \ne 0$, or
		\item $A[1,:] = 0$, $A[:,1] \ne 0$, or
		\item $A[:,1] = 0$, $A[1,:] = 0$.
	\end{enumerate}

	Case 1: $A[:,1] = 0$. As for \Cref{thm:main}, we can find the smallest $j$ such that $A[1,j] \neq 0$. We pivot column 1 and $j$.

	We need to prove that column $j$ cannot be a linear combination of the previous columns. Assume it is. Then there exists $x$ such that:
	\[ A[:,j] = A[:,1:j-1] x \]
	So:
	\[ A[1,j] = A[1,1:j-1] x = 0 \]
	Since $A[1,j] \neq 0$, we have a contradiction. So column $j$ is linearly independent of the previous columns.

	Denote by $B = A \Pi_j$ the permuted matrix. $B$ satisfies \Cref{prop:rank} and $b_{11} \neq 0$. So $B$ has an LU factorization with the sparsity pattern of \Cref{thm:main2}. Since column $j$ of $B$ is zero, column $j$ of $U_{B}$ is zero. We define
	\[ P_c = P_{c,B} \Pi_j, \quad U = U_B \Pi_j \]
	$\Pi_j$ swaps columns 1 and $j$ of $U_{B}$. We have $e_1 = \Pi_j^T e_j$ and column $j$ of $U_{B}$ is zero. We also have a $j_0$ such that $e_j = P_{c,B}^T e_{j_0}$ and $e_1 = P_c^T e_{j_0}$. Note that since column $j$ of $B$ (which is column 1 of $A$) is zero, we are beyond the rank-revealing part and $j_0 > r$. Since column 1 of $U$ is zero and the other columns of $U$ have the sparsity pattern of \Cref{thm:main2}, $U$ also satisfies the sparsity pattern of \Cref{thm:main2}.

	Case 2: $A[1,:] = 0$. Consider $B = A^T$. $B$ corresponds to Case 1 and satisfies the sparsity pattern of \Cref{thm:main2}. So $A$ also satisfies the sparsity pattern of \Cref{thm:main2}.

	Case 3: If $A = 0$, it has an LU factorization. In that case we pick $L=U=0$ as the factors. If $A \neq 0$, let's denote by $j$ the smallest index such that column $j$ is non zero. Let's denote by $i > 1$ the smallest index such that $a_{ij} \neq 0$. Let's pivot column 1 and $j$ and denote by $B = A \Pi_j$ the permuted matrix. $B$ satisfies Case 2, so it has an LU factorization: $B = L U$ with the sparsity pattern of \Cref{thm:main2}. Following a reasoning analogous to Case 1, we can show that $A$ also satisfies the sparsity pattern of \Cref{thm:main2}.

	As in \Cref{thm:main}, we have also proved that this process must terminate after $r$ steps where $r = \rk(A)$. So the LU factorization of $B$ is rank revealing.

	Following \Cref{lem:oblique}, any column $j_0$ in $A$ that is a linear combination of the previous columns will become 0 at step $\min(r,j_0-1)$. If $j_0 \le r$, this column will be moved rightward to position $j$ with $j > r$. Any column $j_0$ that is a linearly independent of the previous columns will never become zero. If $j_0 > r$, it will be moved leftward to position $j$ with $j \le r$.

	A similar result applies to the rows of $A$.
\end{proof}

\subsection{Python implementation}

In \Cref{sec:python_restricted_pivoting}, we provide a Python implementation of the algorithm. The algorithm returns L and U such that \verb|A = L @ U|.

Please compare against the algorithm with full pivoting in \Cref{sec:python_full_pivoting}, which returns P, Q, L, and U such that \verb|P @ A @ Q = L @ U|. Full pivoting selects the largest pivot for maximum stability. Our algorithm selects the pivot to ensure that the factors L and U remain triangular.

\Cref{lst:lu_restricted_pivoting} is vectorized for performance. The internal permutations are done implicitly. For example, line~\ref{copy_leftward} moves column $j$ to column $k$. Line~\ref{zero_column} zeros out column $j$ of $A$ in exact arithmetic.

This algorithm is not backward stable as we do not control the size of the pivot. Finding a backward stable algorithm will be the topic of future work.

The pseudo-code in~\cite{okunev_necessary_2005} (Algorithm 1, p.~12) does not consider the case where the factorization does not exist.

\section{Unit triangular factors}\label{sec:unit_triangular}

We now discuss results where we require one of the factors to be unit triangular. Classical results consider the factorization with partial pivoting $PA = LU$ (\Cref{thm:row_permutation}). The row permutation $P$ allows us to obtain a unit lower triangular factor $L$. When a zero pivot is encountered, we perform a row pivoting to get a non zero pivot. If the entire column is equal to zero, we set the diagonal entry of $L$ to 1, the entries below to 0, and continue the factorization with the next column.

We are interested in conditions for the existence of $A=LU$ where $L$ is unit lower triangular. As before, this requires internal permutations, but this time they can only be applied to the columns. The triangular unit structure of $L$ prevents row permutations. The condition on $A$ is more stringent than in the general LU case and we require
\[
	\nl(A[1:k,1:k]) = \nl(A[:,1:k])
\]
for all $k$.

We note that, for a general matrix $A$, column pivoting in general is not sufficient to get a unit lower triangular factor. For example, consider the matrix
\[
	A = \begin{pmatrix}
		0 & 0 \\
		1 & 1
	\end{pmatrix}
\]
No column pivoting works. However, row pivoting is sufficient to get a unit lower triangular factor.

\Cref{tab:constraints} summarizes the constraints on the factorization, the condition (for all $k$), and the stringency of the condition.

\begin{table}[htbp]
	\centering
	\begin{tabular}{lp{5cm}p{7cm}}
		\toprule
		Constraint & Condition (for all $k$) & Stringency \\
		\midrule
		General LU & \raggedright $\nl(A_{11}) \le \nl(\text{Col}) + \nl(\text{Row})$ & Least stringent (allows mixed dependencies) \\
		Unit Lower L & $\nl(A_{11}) = \nl(\text{Col})$ & Moderate (requires column dependencies) \\
		Unit Upper U & $\nl(A_{11}) = \nl(\text{Row})$ & Moderate (requires row dependencies) \\
		Unit L and Unit U & \raggedright Only possible if $A$ is strongly non-singular & Most stringent (impossible for singular $A$) \\
		\bottomrule
	\end{tabular}
	\vspace{5pt}
	\caption{Constraints on factorization, condition (for all $k$), and stringency of condition.}\label{tab:constraints}
\end{table}

The theorem below should be compared with \Cref{thm:row_permutation}.

\begin{theorem}\label{thm:main_unit_lower}
	Let $A$ be a matrix of size $n$ and rank $r \le n$. $A$ has an LU factorization $A = LU$ where $L$ is square unit lower triangular if and only if for all $k = 1, \ldots, n$:
	\begin{equation}
		\label{eq:cond_unit_lower}
		\nl(A[1:k,1:k]) = \nl(A[:,1:k])
	\end{equation}

	\noindent\textbf{Construction:} If $A$ satisfies \Cref{eq:cond_unit_lower}, there exists a column permutation $P_c$ such that the matrix $B = A P_c^{-1}$ admits a factorization $B = L_B U_B$ where $L_B$ is unit lower triangular. Consequently,
	\[ A = L_B (U_B P_c) = L U \]
	is a valid factorization of $A$ with $L=L_B$ unit lower triangular and $U$ upper triangular.

	\noindent\textbf{Structure:} For the matrix $B$ constructed above, row $j$ is a linear combination of the previous rows if and only if column $j$ is a linear combination of the previous columns.

	\noindent\textbf{Sparsity:} Let $j_0$ be the column index in $A$ corresponding to the logical column index $j$ in $B$ (i.e., $e_{j_0} = P_c^T e_j$). The factors exhibit the following sparsity patterns:
	\begin{itemize}
		\item \textbf{Dependent Case:} If row $j$ of $B$ is a linear combination of the previous rows, then $j_0 \le j$. Furthermore:
		\[ L[:,j] = e_j, \quad U[j,:] = 0, \quad \text{and} \quad U[j_0:, j_0] = 0 \]
		\item \textbf{Independent Case:} If row $j$ of $B$ is linearly independent of the previous rows, then $j_0 \ge j$. Furthermore:
		\[ U[j+1:, j_0] = 0 \]
	\end{itemize}
\end{theorem}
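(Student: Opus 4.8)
The plan is to prove necessity directly from the rank inequalities and then establish sufficiency by an induction on $n$ that mirrors the proofs of \Cref{thm:main,thm:main2} but exploits the stronger hypothesis to eliminate all row permutations.

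For necessity I would use the triangular structure: if $A = LU$ with $L$ square unit lower triangular (hence invertible) and $U$ upper triangular, then $A[1:k,1:k] = L[1:k,1:k]\,U[1:k,1:k]$ and $A[:,1:k] = L[:,1:k]\,U[1:k,1:k]$. Since $L[1:k,1:k]$ and $L$ are invertible, \Cref{lem:rank} forces $\rk(A[1:k,1:k]) = \rk(U[1:k,1:k]) = \rk(A[:,1:k])$, and because both submatrices have $k$ columns, \Cref{thm:rank-nullity} upgrades this to $\nl(A[1:k,1:k]) = \nl(A[:,1:k])$. I would note in passing that $A[1:k,1:k]$ is a row-restriction of $A[:,1:k]$, so $\nl(A[1:k,1:k]) \ge \nl(A[:,1:k])$ holds automatically; \Cref{eq:cond_unit_lower} is thus equivalent to the reverse inequality, consistent with the ``moderate'' stringency listed in \Cref{tab:constraints}.

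For sufficiency I would induct on $n$, the case $n = 1$ being immediate. The pivotal observation is that if $a_{11} = 0$ then \Cref{eq:cond_unit_lower} at $k = 1$ forces $\nl(A[:,1]) = 1$, that is, the \emph{entire} first column of $A$ vanishes; this is exactly what excludes the ``row-only dependence'' configuration that in \Cref{thm:main2} demanded a row swap, so the construction keeps $P_r = I$. Three cases arise. If $a_{11} \neq 0$, I would write $A = LDU$ with $L$ unit lower triangular, $U$ upper triangular and $D = \mathrm{diag}(a_{11}, S)$ for the Schur complement $S$; since $L$ and $U$ are invertible triangular factors, $A$ and $D$ satisfy \Cref{eq:cond_unit_lower} simultaneously, and, after shifting the index by one, so does $S$, so the induction hypothesis yields $S = L_S U_S$ with $L_S$ unit lower triangular and reassembling produces the unit-lower factorization of $A$. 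If $a_{11} = 0$, $A[:,1] = 0$ and $A[1,:] \neq 0$, I would take the least $j$ with $A[1,j] \neq 0$ and set $B = A\Pi_j$: for $k \ge j$ the swap leaves all relevant ranks unchanged, while for $k < j$ the zero pattern of row $1$ forces both $\nl(B[1:k,1:k])$ and $\nl(B[:,1:k])$ to fall by exactly one relative to $A$, so $B$ again satisfies \Cref{eq:cond_unit_lower}; since $b_{11} \neq 0$ we are back in the first case, and since column $j$ of $B$ is zero and $L_B$ is invertible, column $j$ of $U_B$ is zero, so $U := U_B \Pi_j$ remains upper triangular, $L := L_B$ is unchanged, and $A = B\Pi_j = LU$. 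If $a_{11} = 0$ and both $A[:,1]$ and $A[1,:]$ vanish, I would deflate the zero first row and column, taking $L = \mathrm{diag}(1, L')$ and $U = \mathrm{diag}(0, U')$ and recursing on $A[2:n,2:n]$, which inherits \Cref{eq:cond_unit_lower} with the index shifted by one.

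Finally I would propagate the column permutation $P_c$, the equivalence between dependent rows and dependent columns, and the sparsity patterns through the induction exactly as in the proof of \Cref{thm:main2}: in the first and third cases every index merely shifts, while in the second case the logical column $j$ becomes dependent (it is a zero column of $B$), lands in the slot previously held by column $1$, and so contributes $L[:,j] = e_j$, $U[j,:] = 0$ and the zero column $U[j_0:,j_0] = 0$, whereas an independent column is never zeroed and can only migrate leftward into the leading rank-$r$ block. The routine-but-delicate part of the write-up, and the main obstacle, is this bookkeeping: verifying that \Cref{eq:cond_unit_lower} survives both the column swap and the deflation, keeping $L$ square $n \by n$ with $e_j$ columns standing in for the ``missing'' pivots (unlike the thin $n \by r$ factor of \Cref{thm:main2}), and checking that the indices $j_0$ and the asserted zero patterns of $L$ and $U$ align once the successive $\Pi_j$ are composed into $P_c$.
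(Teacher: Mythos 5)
Your proposal is correct and follows essentially the same route as the paper: necessity via the identities $A[1:k,1:k]=L[1:k,1:k]\,U[1:k,1:k]$ and $A[:,1:k]=L[:,1:k]\,U[1:k,1:k]$ with the invertibility of the leading blocks of $L$, and sufficiency by induction on $n$ with the same three cases (nonzero pivot via the Schur complement, zero pivot with $A[:,1]=0$ and nonzero first row handled by a column swap whose admissibility is checked exactly as in \Cref{thm:main}, and zero row and column handled by deflation with $D=\mathrm{diag}(1,S)$), including the key observation that \Cref{eq:cond_unit_lower} at $k=1$ forces the entire first column to vanish so that no row permutation is ever needed. The sparsity and structure bookkeeping you defer to the induction is treated at a comparable level of detail in the paper itself.
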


\begin{proof} \par
	\textbf{Condition is necessary.} \quad Short proof: $U$ trivially satisfies \Cref{eq:cond_unit_lower} since it is upper triangular. Since $L[1:k,1:k]$ is full rank, $A$ also satisfies \Cref{eq:cond_unit_lower}.

	Direct proof: Assume that the matrix $A$ admits a factorization $A = LU$, where $L$ is a unit lower triangular matrix and $U$ is an upper triangular matrix.

	We analyze the structure of the matrices for an arbitrary $k \in \{1, \dots, n\}$.
	Because $U$ is upper triangular, the first $k$ columns of $A$ depend only on the first $k$ columns of $U$. Specifically, we can write the submatrices of $A$ as:

	\begin{itemize}
		\item Leading principal submatrix
		\[ A[1:k, 1:k] = L[1:k, 1:k] \; U[1:k, 1:k] \]
		\item Leading column block
		\[ A[:, 1:k] = L[:, 1:k] \; U[1:k, 1:k] \]
	\end{itemize}

	Since $L$ is \textbf{unit} lower triangular, its diagonal entries are all 1. This implies that any leading principal submatrix of $L$ is invertible.
	\begin{itemize}
		\item Let $L_{kk} = L[1:k, 1:k]$. Because $\det(L_{kk}) = 1$, $L_{kk}$ is non-singular (full rank).
		\item Let $L_{col} = L[:, 1:k]$. The top block of $L_{col}$ is $L_{kk}$. Since $L_{kk}$ is non-singular, the larger matrix $L_{col}$ must have full column rank (rank $k$).
	\end{itemize}

	We now compute the ranks of the submatrices of $A$ using the properties of $L$:
	\begin{itemize}
		\item For the principal submatrix: since $L_{kk}$ is non-singular, multiplying by it does not change the rank:
		\[ \rk(A[1:k, 1:k]) = \rk(L_{kk} \cdot U[1:k, 1:k]) = \rk(U[1:k, 1:k]) \]
		\item For the column block: since $L_{col}$ has full column rank, multiplying by it on the left preserves the rank of the matrix on the right:
		\[ \rk(A[:, 1:k]) = \rk(L_{col} \cdot U[1:k, 1:k]) = \rk(U[1:k, 1:k]) \]
	\end{itemize}

	Comparing the results, we see that the ranks are identical:
	\[ \rk(A[1:k, 1:k]) = \rk(A[:, 1:k]) \]

	By the Rank-Nullity Theorem, for any matrix $M$ with $k$ columns, $\nl(M) = k - \rk(M)$. Since both $A[1:k, 1:k]$ and $A[:, 1:k]$ have exactly $k$ columns:
	\begin{align*}
		\nl(A[1:k, 1:k]) & = k - \rk(A[1:k, 1:k]) \\
		\nl(A[:, 1:k])   & = k - \rk(A[:, 1:k])
	\end{align*}

	Therefore:
	\[ \nl(A[1:k, 1:k]) = \nl(A[:, 1:k]) \]
	This completes the proof that the condition is necessary.

	\noindent\textbf{Condition is sufficient.} \quad We follow a proof similar to the one for \Cref{thm:main2}. We do an induction on the size of $A$. The result is true for $n=1$.

	Assume it is true for size $n-1$. If $a_{11} \neq 0$, we verify that the result is true by the induction hypothesis.

	Assume $a_{11} = 0$. From \Cref{eq:cond_unit_lower}, we have $A[:,1] = 0$.

	Case 1: If row 1 of $A$ is non-zero, we can use the same steps as in the proof as \Cref{thm:main2} with a column permutation to prove the result by induction. In addition, this shows that if $e_{j_0} = P_c^T e_j$ and if row $j$ of $B$ is linearly independent of the previous rows then $j_0 \ge j$ and $U[j+1:,j_0] = 0$.

	Case 2: If row 1 of $A$ is zero, we cannot use the same proof as \Cref{thm:main2} because we can no longer permute rows. However, since row 1 of $A$ is zero, we can define:
	\[
		A = IDU
	\]
	with
	\[
		D = \begin{pmatrix}
			1 & 0 \\
			0 & S
		\end{pmatrix}, \quad
		U = \begin{pmatrix}
			0 & A[1,2:n] \\
			0 & I
		\end{pmatrix} = \begin{pmatrix}
			0 & 0 \\
			0 & I
		\end{pmatrix}
	\]
	and $S=A_{22}$. $D$ satisfies
	\[
		\nl(D[1:k,1:k]) = \nl(A[1:k,1:k]) - 1
		= \nl(A[:,1:k]) - 1 = \nl(D[:,1:k])
	\]
	$D$ satisfies \Cref{eq:cond_unit_lower}. By the induction hypothesis, we can find a column permutation $P_{c,S}$ such that $S P_{c,S}^{-1}$ has an LU factorization with unit lower triangular factor.

	This also proves that row $j$ of $B$ is a linear combination of the previous rows if and only if column $j$ of $B$ is a linear combination of the previous columns. Moreover, if $e_{j_0} = P_c^T e_j$ and row $j$ of $B$ is a linear combination of the previous rows, then $j_0 \le j$. We have $L[:,j] = e_j$, $U[j,:] = 0$, and $U[j_0:,j_0] = 0$.

	Note that the step above with $D$ is not a valid step in \Cref{thm:main2}. Indeed:
	\begin{align*}
		\nl(D[1:k,1:k])   & = \nl(A[1:k,1:k]) - 1 \\
		\nl(D[:,1:k])     & = \nl(A[:,1:k]) - 1 \\
		\nl({D[1:k,:]}^T) & = \nl({A[1:k,:]}^T) - 1
	\end{align*}
	So that \Cref{prop:rank} may not be satisfied by $D$.
\end{proof}

We now state the corresponding result where $U$ is unit upper triangular. Please compare the result below with \Cref{cor:col_permutation}.

\begin{corollary}\label{corr:main_unit_upper}
	Let $A$ be a matrix of size $n$ and rank $r \le n$. $A$ has an LU factorization $A = LU$ where $U$ is unit upper triangular if and only if for all $k = 1, \ldots, n$:
	\begin{equation}
		\label{eq:cond_unit_upper}
		\nl(A[1:k,1:k]) = \nl({A[1:k,:]}^T)
	\end{equation}

	\noindent\textbf{Construction:} If $A$ satisfies \Cref{eq:cond_unit_upper}, there exists a row permutation $P_r$ such that the matrix $B = P_r^{-1} A$ admits a factorization $B = L_B U_B$ where $U_B$ is unit upper triangular. Consequently,
	\[ A = (P_r L_B) U_B = L U \]
	is a valid factorization of $A$ with $U=U_B$ unit upper triangular and $L$ lower triangular.

	\noindent\textbf{Structure:} For the matrix $B$ constructed above, column $i$ is a linear combination of the previous columns if and only if row $i$ is a linear combination of the previous rows.

	\noindent\textbf{Sparsity:} Let $i_0$ be the row index in $A$ corresponding to the logical row index $i$ in $B$ (i.e., $e_{i_0} = P_r e_i$). The factors exhibit the following sparsity patterns:
	\begin{itemize}
		\item \textbf{Dependent Case:} If column $i$ of $B$ is a linear combination of the previous columns, then $i_0 \le i$. Furthermore:
		\[ U[i,:] = e_i, \quad L[:,i] = 0, \quad \text{and} \quad L[i_0, i_0:] = 0 \]
		\item \textbf{Independent Case:} If column $i$ of $B$ is linearly independent of the previous columns, then $i_0 \ge i$. Furthermore:
		\[ L[i_0, i+1:] = 0 \]
	\end{itemize}
\end{corollary}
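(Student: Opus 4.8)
The plan is to obtain this result by transposition, exactly as Corollary~\ref{cor:col_permutation} follows from Theorem~\ref{thm:row_permutation}. First I would set $B = A^T$ and observe that $A$ admits a factorization $A = LU$ with $U$ unit upper triangular if and only if $B = A^T = U^T L^T$ is a factorization of $B$ with $U^T$ unit \emph{lower} triangular and $L^T$ upper triangular; these are in bijection. Hence $A$ has such a factorization iff $B$ satisfies the hypothesis of Theorem~\ref{thm:main_unit_lower}. Next I would translate condition~\eqref{eq:cond_unit_lower} applied to $B$ back into a statement about $A$: since $B[1:k,1:k] = {A[1:k,1:k]}^T$ and $B[:,1:k] = {A[1:k,:]}^T$, and since $\nl(M) = \nl(M^T)$ fails in general but $\rk(M)=\rk(M^T)$ always holds, I would phrase things via rank. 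We have $\rk(B[1:k,1:k]) = \rk(A[1:k,1:k])$ and $\rk(B[:,1:k]) = \rk(A[1:k,:])$. Condition~\eqref{eq:cond_unit_lower} for $B$ reads $\nl(B[1:k,1:k]) = \nl(B[:,1:k])$, i.e.\ (using rank-nullity, both matrices having $k$ columns) $\rk(B[1:k,1:k]) = \rk(B[:,1:k])$, which is $\rk(A[1:k,1:k]) = \rk(A[1:k,:])$. Applying rank-nullity once more — $A[1:k,1:k]$ has $k$ columns so $\nl(A[1:k,1:k]) = k - \rk(A[1:k,1:k])$, and $\nl({A[1:k,:]}^T) = k - \rk({A[1:k,:]}^T) = k - \rk(A[1:k,:])$ since $A[1:k,:]$ has $k$ rows — this is exactly~\eqref{eq:cond_unit_upper}. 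So the two conditions correspond under transposition, giving the iff.

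For the Construction, Structure, and Sparsity claims I would apply Theorem~\ref{thm:main_unit_lower} to $B = A^T$ and dualize. The column permutation $P_{c}$ provided by that theorem for $A^T$ becomes, after transposing, a row permutation for $A$: if $A^T (P_c)^{-1} = L_{B'} U_{B'}$ with $L_{B'}$ unit lower triangular, then transposing gives $(P_c)^{-T} A = U_{B'}^T L_{B'}^T$, so setting $P_r^{-1} = (P_c)^{-T}$ (equivalently $P_r = P_c^T$), $U = U_{B'}^T$ (unit upper triangular), $L_B = L_{B'}^T$, we get $P_r^{-1} A = L_B U_B$ with $U_B$ unit upper triangular, and $A = (P_r L_B) U_B$. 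The "row/column of $B$ dependent on previous ones" statements for $A$ translate to "column/row of the transposed object dependent on previous ones", which is precisely the Structure claim stated here after renaming indices $j \leftrightarrow i$. The Sparsity pattern $L[:,j]=e_j$, $U[j,:]=0$, $U[j_0:,j_0]=0$ in Theorem~\ref{thm:main_unit_lower} transposes entry-by-entry into $U[i,:]=e_i$, $L[:,i]=0$, $L[i_0,i_0:]=0$; the independent-case bound $U[j+1:,j_0]=0$ becomes $L[i_0,i+1:]=0$; and the index inequalities $j_0 \le j$ / $j_0 \ge j$ become $i_0 \le i$ / $i_0 \ge i$ since $e_{i_0} = P_r e_i = P_c^T e_i$ matches the index bookkeeping in Theorem~\ref{thm:main_unit_lower} with the roles of $P_c^T e_j$ replaced by $P_r e_i$.

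The only mildly delicate point — and the one I would write out carefully — is the index bookkeeping for the permutations: making sure that "$e_{j_0} = P_c^T e_j$" in Theorem~\ref{thm:main_unit_lower} for the matrix $A^T$ really does turn into "$e_{i_0} = P_r e_i$" for $A$ under $P_r = P_c^T$, and that the transpose sends lower-triangular sparsity supports to upper-triangular ones in the claimed directions (a row-support statement $L[i_0, i_0{:}] = 0$ is the transpose of a column-support statement $U[j_0{:}, j_0] = 0$, etc.). Everything else is a mechanical dualization of an already-proved theorem, so I do not expect any genuine obstacle; the proof is essentially one sentence ("apply Theorem~\ref{thm:main_unit_lower} to $A^T$") plus the rank-nullity translation of the hypothesis and a table matching transposed sparsity patterns.
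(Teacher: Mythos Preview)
Your proposal is correct and follows exactly the paper's approach: the paper's proof of this corollary is the single line ``Consider $B = A^T$. Apply \Cref{thm:main_unit_lower}.'' You have spelled out in detail the rank-nullity translation of the hypothesis and the transposed index bookkeeping that the paper leaves implicit; there is a small labeling slip in your transposition (you want $L_B = U_{B'}^T$ and $U_B = L_{B'}^T$, not the other way around), but the argument itself is sound.
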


\begin{proof}
	Consider $B = A^T$. Apply \Cref{thm:main_unit_lower}.
\end{proof}

\subsection{Python implementation}

In \Cref{sec:python_unit_lower}, we provide a Python implementation of the algorithm. This code is significantly simpler since we only do internal permutations on the columns of $A$. As previously, the internal permutations are done implicitly, and this algorithm is not backward stable.

\section{Conclusion}

In this paper, we have provided an analysis of the necessary and sufficient conditions for the existence of an LU factorization for arbitrary square matrices, without the use of pivoting. We established that the existence of such a factorization is determined by a specific inequality relating the nullity of the leading principal submatrices to the nullity of the corresponding leading column and row blocks. This condition generalizes the classical requirement for non-singular matrices (non-vanishing leading principal minors) to the general rank-deficient case.

Our approach utilizes a constructive proof based on induction. This method not only simplifies the theoretical derivation but also provides explicit insight into how singular blocks can be handled through internal permutations that preserve the overall triangular structure of the factors.

Furthermore, we defined and characterized rank-revealing LU factorizations without global pivoting. We derived tight sparsity bounds for the resulting factors $L$ and $U$, linking the physical indices of the non-zero entries to the logical rank profile of the matrix. Finally, we examined the constrained cases where one factor is required to be unit triangular, identifying the specific rank conditions required and their duality with respect to row and column permutations. These results offer a unified framework for understanding matrix factorizations in the presence of singularity and rank deficiency.

\appendix

\section{Appendix}\label{sec:appendix}

We prove a few additional results that are helpful in the main proofs. For additional background on connections between the LU factorization, Schur complements, and oblique projections, please refer to~\cite{Ouellette_1981,Zhang_2006,Householder_2013}.

\begin{lemma}\label{lem:oblique_projection}
	Denote by $X$ and $Y$ two matrices of size $n \by k$. Assume that $X$ is full rank and that $Y^T X$ is non-singular. Define:
	\[ P = X {(Y^T X)}^{-1} Y^T \]
	Then $P$ is the oblique projection onto the range of $X$ along the orthogonal complement of the range of $Y$.
\end{lemma}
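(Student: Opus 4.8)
The plan is to verify directly the three defining properties of an oblique projection: idempotency, the correct range, and the correct kernel, and then to assemble these into the direct-sum statement that pins down "projection onto $\ldots$ along $\ldots$". First I would check $P^2 = P$ by a one-line computation: substituting the definition and using $(Y^TX)^{-1}(Y^TX) = I_k$, the two inner factors telescope, leaving $P^2 = X(Y^TX)^{-1}Y^T = P$.

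Next I would identify the range. The inclusion $\operatorname{range}(P) \subseteq \operatorname{range}(X)$ is immediate from the factored form of $P$. For the reverse inclusion I would observe that for any $w$ the vector $v = Xw$ satisfies $Pv = X(Y^TX)^{-1}(Y^TX)w = Xw = v$; hence $P$ fixes every vector of $\operatorname{range}(X)$, so $\operatorname{range}(X)\subseteq\operatorname{range}(P)$. Combined with idempotency this gives $\operatorname{range}(P) = \operatorname{range}(X)$ and shows $P$ restricts to the identity there. Then I would compute the kernel: since $X$ has full column rank ($Xz=0$ forces $z=0$) and $(Y^TX)^{-1}$ is invertible, $Pv=0$ is equivalent to $Y^Tv=0$, i.e. $\ker(P) = \ker(Y^T) = (\operatorname{range} Y)^\perp$.

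Finally I would assemble the direct-sum decomposition. Because $Y^TX$ is an invertible $k\times k$ matrix, $Y$ must have rank $k$, so $\dim(\operatorname{range} Y)^\perp = n-k$, while $\dim\operatorname{range}(X) = k$ (by the full-rank hypothesis on $X$); moreover $\operatorname{range}(X)\cap(\operatorname{range} Y)^\perp = \{0\}$ because $v = Xw$ with $Y^Tv = 0$ gives $Y^TXw = 0$, hence $w = 0$. Thus $\mathbb{R}^n = \operatorname{range}(X) \oplus (\operatorname{range} Y)^\perp$, and an element decomposed along this sum is mapped by $P$ to its first component (identity on $\operatorname{range}(X)$, zero on $(\operatorname{range} Y)^\perp$), which is exactly the asserted oblique projection.

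There is no genuine obstacle; the only point requiring a moment's care is deducing that $Y$ is full column rank from the non-singularity of $Y^TX$ — this is what guarantees the complement has the right dimension so that the decomposition is a true direct sum — together with matching the verified facts (idempotent, fixes $\operatorname{range}(X)$, kills $(\operatorname{range} Y)^\perp$) to the exact meaning of "oblique projection onto the range of $X$ along the orthogonal complement of the range of $Y$".
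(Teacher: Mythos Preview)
Your proof is correct and follows essentially the same approach as the paper's: both verify $PX = X$ to identify the range and $Pv = 0$ whenever $Y^Tv = 0$ to identify the kernel, with dimension counts filling in the equalities. Your version is more thorough --- you explicitly check idempotency and spell out the direct-sum decomposition, whereas the paper leaves these implicit --- but the underlying argument is the same.
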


\begin{proof}
	We have:
	\[ P X = X {(Y^T X)}^{-1} Y^T X = X \]
	So the range of $P$ contains the range of $X$. Since $\rk(P) \le k$ and $\rk(X)=k$, the range of $P$ is exactly the range of $X$.

	Consider $v$ in the orthogonal complement of the range of $Y$ (i.e., $Y^T v = 0$). We have:
	\[ P v = X {(Y^T X)}^{-1} Y^T v = 0 \]
	Thus, the null space of $P$ contains the orthogonal complement of the range of $Y$. By the rank-nullity theorem, the dimensions match, so the null space of $P$ is exactly the orthogonal complement of the range of $Y$.
\end{proof}

\begin{lemma}\label{lem:oblique_projection_specific}
	Denote by $X = [x_1, \ldots, x_k]$ a rank $k$ matrix. Consider the oblique projection $P_o(X)$ onto
	\[ S = \spn\{ x_1,\dots,x_k \} \] 
	along the subspace of vectors with the first $k$ entries equal to zero. Assume that $X[1:k,:]$ is non-singular. Then:
	\[ P_o(X) = X {(X[1:k,:])}^{-1} Y^T \]
	where
	\[ Y = \begin{pmatrix}
			I_k \\ 0
		\end{pmatrix} \]
\end{lemma}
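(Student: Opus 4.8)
The plan is to recognize that this lemma is nothing more than a concrete specialization of \Cref{lem:oblique_projection}, obtained by choosing the companion matrix $Y$ to be the coordinate injection $\begin{pmatrix} I_k \\ 0 \end{pmatrix}$ of size $n \by k$. So the proof reduces to checking that the hypotheses of \Cref{lem:oblique_projection} hold for this $Y$ and then translating the conclusion into the language of the present statement.

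First I would verify the hypotheses. The matrix $X$ is full rank ($=k$) by assumption. Next, compute $Y^T X$: since the rows of $Y^T$ pick out the first $k$ coordinates, $Y^T X = X[1:k,:]$, which is non-singular by hypothesis. Hence both conditions required by \Cref{lem:oblique_projection} are met, and $P = X\,{(Y^T X)}^{-1} Y^T = X\,{(X[1:k,:])}^{-1} Y^T$ is the oblique projection onto the range of $X$ along the orthogonal complement of the range of $Y$.

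It then remains to identify the two relevant subspaces. The range of $X$ is exactly $S = \spn\{x_1,\dots,x_k\}$. The range of $Y$ is $\spn\{e_1,\dots,e_k\}$, so its orthogonal complement is the set of vectors $v$ with $e_i^T v = 0$ for $i = 1,\dots,k$, i.e., the subspace of vectors whose first $k$ entries vanish. Substituting these identifications into the conclusion of \Cref{lem:oblique_projection} gives $P_o(X) = X\,{(X[1:k,:])}^{-1} Y^T$, as claimed.

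There is essentially no hard step here; the only point requiring a moment of care is the explicit identification of the "along" subspace — confirming that the orthogonal complement of $\spn\{e_1,\dots,e_k\}$ is precisely $\{v : v_1 = \cdots = v_k = 0\}$ — and noting that the non-singularity hypothesis on $X[1:k,:]$ is exactly the hypothesis $Y^T X$ non-singular needed to invoke the general lemma.
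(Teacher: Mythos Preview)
Your proposal is correct and follows essentially the same approach as the paper: specialize \Cref{lem:oblique_projection} by taking $Y = \begin{pmatrix} I_k \\ 0 \end{pmatrix}$, observe that $Y^T X = X[1:k,:]$ so the non-singularity hypothesis is met, and identify the orthogonal complement of the range of $Y$ with the subspace of vectors whose first $k$ entries vanish. If anything, you are slightly more explicit than the paper in verifying the hypotheses of the general lemma.
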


\begin{proof}
	In \Cref{lem:oblique_projection}, set $Y$ as
	\[ Y = \begin{pmatrix}
			I_k \\ 0
		\end{pmatrix} \]
	The range of $Y$ is the span of the first $k$ standard basis vectors. Its orthogonal complement is exactly the subspace of vectors with the first $k$ entries equal to zero.
\end{proof}

Denote by $Q_o(X) = I - P_o(X)$ the oblique projection onto the subspace of vectors with the first $k$ entries equal to zero along $S=\spn(X)$.

Consider the state of the matrix at the beginning of step $k$ of the LU factorization algorithm (after $k-1$ columns have been eliminated). The Schur complement can be interpreted in terms of the oblique projection $Q_o$. Specifically, from \Cref{lem:oblique_projection_specific} applied to the first $k-1$ columns, we have:
\[ S^o_k = A[:,k:] - A[:,1:k-1] {A[1:k-1,1:k-1]}^{-1} A[1:k-1,k:]
	= Q_o(A[:,1:k-1]) \; A[:,k:]
\]
The standard Schur complement $S_k$ corresponds to the bottom block of this projected matrix: $S_k = S^o_k[k:,:]$.

\begin{lemma}\label{lem:oblique}
	If column $j$ of $A$ (where $j \ge k$) is linearly dependent on columns $1$ to $k-1$ of $A$, then column $j - k + 1$ of $S^o_k$ is zero.
\end{lemma}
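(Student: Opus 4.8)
The plan is to read the statement directly off the oblique-projection formula for the Schur complement stated just before the lemma. Recall that at the start of step $k$ we have $S^o_k = Q_o(A[:,1:k-1])\,A[:,k:]$, where $Q_o(A[:,1:k-1]) = I - P_o(A[:,1:k-1])$ and $P_o(A[:,1:k-1])$ is the oblique projection onto $\spn(A[:,1:k-1])$; this is well defined in the inductive setting because $A[1:k-1,1:k-1]$ is assumed non-singular at step $k$, so \Cref{lem:oblique_projection_specific} applies with $X = A[:,1:k-1]$. First I would translate indices: column $j$ of $A$ with $j \ge k$ is column $j-k+1$ of $A[:,k:]$, hence column $j-k+1$ of $S^o_k$ equals $Q_o(A[:,1:k-1])\,A[:,j]$.

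Next I would invoke the defining property of $P_o$ established in \Cref{lem:oblique_projection}, namely $P_o(X)\,X = X$ with $X = A[:,1:k-1]$; applied column by column this says $P_o(A[:,1:k-1])$ fixes every vector in $\operatorname{range}(A[:,1:k-1]) = \spn(A[:,1:k-1])$. The hypothesis is precisely that $A[:,j] \in \spn(A[:,1:k-1])$, i.e. $A[:,j] = A[:,1:k-1]\,x$ for some $x$, so $P_o(A[:,1:k-1])\,A[:,j] = A[:,j]$. Therefore $Q_o(A[:,1:k-1])\,A[:,j] = A[:,j] - A[:,j] = 0$, i.e. column $j-k+1$ of $S^o_k$ is zero, which is the claim.

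There is no substantial obstacle here beyond being careful about the index shift between $A$, $A[:,k:]$, and $S^o_k$, and beyond checking the hypotheses of \Cref{lem:oblique_projection} in the inductive context ($X = A[:,1:k-1]$ has full column rank $k-1$, and $Y^TX = A[1:k-1,1:k-1]$ is non-singular). If one prefers a self-contained argument avoiding the projection language, I would instead substitute $A[:,j] = A[:,1:k-1]\,x$ directly into $A[:,j] - A[:,1:k-1]\,{A[1:k-1,1:k-1]}^{-1}A[1:k-1,j]$ and use the restricted relation $A[1:k-1,j] = A[1:k-1,1:k-1]\,x$ to conclude $A[:,1:k-1]x - A[:,1:k-1]x = 0$; the two routes are the same computation.
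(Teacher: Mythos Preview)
Your proof is correct and follows essentially the same route as the paper: both use that $S^o_k = Q_o(A[:,1:k-1])\,A[:,k:]$, observe that $A[:,j] \in \spn(A[:,1:k-1])$ implies $P_o(A[:,1:k-1])\,A[:,j] = A[:,j]$, and conclude $Q_o(A[:,1:k-1])\,A[:,j] = 0$. Your write-up is in fact more explicit than the paper's about the index shift and about verifying the hypotheses of \Cref{lem:oblique_projection}, and the alternative self-contained substitution you offer at the end is simply the same computation unpacked.
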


\begin{proof}
	Since column $j$ of $A$ is linearly dependent on columns $1$ to $k-1$, we have:
	\[ A[:,j] \in \spn\{ A[:,1], \ldots, A[:,k-1] \} \] 
	Let $X = A[:,1:k-1]$. Then $A[:,j] \in \spn(X)$. By definition of the projection $P_o(X)$:
	\[ P_o(X) A[:,j] = A[:,j] \]
	Therefore:
	\[ Q_o(X) A[:,j] = (I - P_o(X)) A[:,j] = 0 \]

	Since $S^o_k$ is formed by applying $Q_o(X)$ to the columns $A[:,k:]$, the column corresponding to $A[:,j]$ is zero.
\end{proof}

\subsection{Python code for LU factorization}\label{sec:python_restricted_pivoting}

See \Cref{lst:lu_restricted_pivoting}.

\lstinputlisting[style=PythonStyle,
	caption={Algorithm for LU factorization. Returns the LU factorization of a matrix $A = LU$ or None if the factorization does not exist. The algorithm is guaranteed to return None only when the factorization does not exist.},
	label={lst:lu_restricted_pivoting},
	rangeprefix=\#\ ,
	linerange=scifig_start-scifig_end,
	includerangemarker=false]{lu_factorization.py}

\subsection{Python code for classical LU with full pivoting}\label{sec:python_full_pivoting}

See \Cref{lst:lu_full_pivoting}.

\lstinputlisting[style=PythonStyle, caption={Classical LU factorization with full pivoting. Returns a factorization of the form $PAQ = LU$.},
	label={lst:lu_full_pivoting},
	rangeprefix=\#\ ,
	linerange=scifig_start-scifig_end,
	includerangemarker=false]{lu_pivoting.py}

\subsection{Python code for unit lower triangular LU factorization}\label{sec:python_unit_lower}

See \Cref{lst:unit_lower_lu}.

\lstinputlisting[style=PythonStyle,
	caption={Unit lower triangular LU factorization. Returns a factorization $A = LU$ where $L$ is unit lower triangular or None if the factorization does not exist.},
	label={lst:unit_lower_lu},
	rangeprefix=\#\ ,
	linerange=scifig_start-scifig_end,
	includerangemarker=false]{unit_lower.py}

\clearpage
\phantomsection 
\bibliographystyle{unsrtnat}
\bibliography{ref}

\end{document}